\newcommand{\matindex}[1]{\mbox{\scriptsize#1}}
\def\RR{\mathbb{R}}
\def\CC{\mathbb{C}}
\def\n{ {\mathcal N} }
\def\m{ {\mathcal M} }
\def\h{ {\mathcal H} }
\def\u{ {\mathcal U} }
\def\t{ {\mathcal T} }
\def\q{ {\mathcal Q} }
\def\s{ {\mathcal S} }
\def\e{ {\mathcal E} }
\def\p{ {\mathcal P} }
\def\k{ {\mathcal K} }
\def\f{ {\mathcal F} }
\def\noi{\noindent}
\def\g1{ \mathfrak{g}_1  }
\def\uj{\mathcal{U}_J}
\def\lie{ \mathfrak{u}_J}
\newcommand{\ra}{\rightarrow}
\newcommand{\ol}{\overline}
\newcommand{\vareps}{\varepsilon}
\newcommand{\x}{\times}
\newcommand{\ort}{[\bot]}
\newcommand{\Q}{\mathcal{Q}}
\newcommand{\M}{\mathcal{M}}
\newcommand{\sdo}{[\dotplus]}
\newcommand{\PI}[2]{\left\langle #1 , #2 \right\rangle}
\newcommand{\K}[2]{\left [ #1 , #2 \right ]}
\newcommand{\PK}[2]{\left [ #1 , #2 \right ]}
\newcommand{\ka}[1]{\kappa_{#1}}
\newtheorem{teo}{Theorem}[section]
\newtheorem{prop}[teo]{Proposition}
\newtheorem{lem}[teo]{Lemma}
\newtheorem{coro}[teo]{Corollary}
\theoremstyle{definition}
\newtheorem{rem}[teo]{Remark}
\begin{document}

\title{On the geometry of normal projections in Krein spaces} 

\author{E. Chiumiento\footnote{Partially supported by  PIP CONICET 0757 and UNLP 11X585.},\ A. Maestripieri\footnote{Partially supported by  PIP CONICET 0757.}\ and\ F. Mart\'{\i}nez Per\'{\i}a\footnote{Partially supported by PIP CONICET 0435 and UNLP 11X585.}}

\date{}

\maketitle

\begin{abstract}
Let $\h$ be a Krein space with fundamental symmetry $J$. 
Along this paper, the geometric structure of the set of $J$-normal projections $\q$ is studied. The group of $J$-unitary operators $\uj$ naturally acts on $\q$. Each orbit of this action turns out to be an analytic homogeneous space of $\uj$, and a connected component of $\q$. 

The relationship between $\q$ and the set $\e$ of $J$-selfadjoint projections is analized: both sets are analytic submanifolds of $L(\h)$ and there is a natural real analytic submersion from $\q$ onto $\e$, namely $Q\mapsto QQ^\#$.

The range of a $J$-normal projection is always a pseudo-regular subspace. Then, for a fixed pseudo-regular subspace $\s$, it is proved that the set of $J$-normal projections onto $\s$ is a covering space of the subset of $J$-normal projections onto $\s$ with fixed regular part. 
\end{abstract}

{\bf AMS Subject Classification:} Primary 	46C20,  
 47B50, 
46T05. 

{\bf Keywords:} Krein space, normal operator, projection, submanifold.

\maketitle

\section{Introduction}

Let $\h$ be a Krein space with fundamental symmetry $J$. A pseudo-regular subspace of $\h$ is a subspace $\s$ such that $\s=\mathcal{M}[\dot{+}]\s^{\circ}$, where $\mathcal{M}$ is a regular subspace  of $\h$ and $\s^{\circ}$ is the isotropic part of $\s$. For instance, subspaces of Pontryagin spaces are always pseudo-regular. Pseudo-regularity appeared as a condition to generalize
results on spectral measures of definitizable operators  from Pontryagin spaces to general Krein spaces \cite{jonas, GJ}. It was also useful to 
extend the Beurling-Lax theorem for shifts acting on indefinite metric spaces \cite{ball, G98}. 

This paper is devoted to investigating the geometric structure of the set of  $J$-normal projections, namely
\[  \q=\{ \, Q \in L(\h) \, : \, Q^2=Q, \, QQ^{\#}=Q^{\#}Q \, \}, \]
where $L(\h)$ is the algebra of  bounded linear operators acting on $\h$ and $Q^{\#}$ stands for the $J$-adjoint of $Q$. This class of projections is intimately related to the family of closed pseudo-regular subspaces of $\h$. 
In fact, a (closed) subspace $\s$ is pseudo-regular if and only if $\s$ is the range of a $J$-normal projection. However, the correspondence between pseudo-regular subspaces and $J$-normal projections is not bijective: there can be infinitely many $J$-normal projections onto the same subspace, see \cite{maestripieri martinez13}.

An operator  $U \in L(\h)$ is $J$-unitary if $UU^{\#}=U^{\#}U=I$. The set $\uj$ of all $J$-unitary operators is a Banach-Lie group endowed with the norm topology of $L(\h)$. It naturally acts by conjugation on the set of $J$-normal projections, i.e.  if $U \in \uj$ and $Q \in \q$ the action of $U$ on $Q$ is defined by  $U \cdot Q=UQU^{\#}$. 

In this paper, it is shown that, for each $Q_0 \in \q$, the orbit $\uj \cdot Q_0$ is an analytic homogenous space of $\uj$. Thus, each orbit can be endowed with the quotient topology. On the other hand, $\uj \cdot Q_0$ has also the topology inherited from $L(\h)$. But it is shown that both topologies coincide. In order to obtain this result, it is proved that the map induced by the action 
\begin{equation}\label{mapa action}
p_{Q_0} : \uj \to \uj \cdot Q_0, \, \, \, \, \, p_{Q_0}(U)=UQ_0U^{\#}, 
\end{equation}	
has a local continuous cross section (Theorem \ref{section}). In \cite{G}, A. Gheondea found several conditions equivalent to the existence of a $J$-unitary implementing equivalence between two pseudo-regular subspaces. The above mentioned local continuous cross section allows to find a $J$-unitary that (locally) depends continuously on the $J$-normal projections and implements the equivalence between their ranges. As a  consequence, it follows  that the action of $\uj$ on $\q$ fills connected components. Furthermore, the orbits can be characterized by means of the signatures and cosignatures associated to the range of any of its projections (Proposition \ref{comp q}).

The problem of finding a local continuous cross section for the action is central to develop the differential geometry of infinite dimensional smooth homogeneous spaces which arise in operator theory. Several examples can be found in \cite{beltita}.  However, each example  usually requires ad-hoc techniques. In particular, the existence of a section for the map given in \eqref{mapa action} relies on two facts.  First, the section given in \cite{corach por re93} for the set of projections in $L(\h)$. Second, after noticing that the isotropic subspaces of the range and nullspace of a $J$-normal projection are closed neutral companions \cite{GJ}, the construction of biorthogonal bases of the sum of these subspaces for each projection in the orbit.

Concerning the smooth structure of $\q$, it turns out that  $\q$ is an analytic submanifold of $L(\h)$. In particular, the same result holds for the set of $J$-selfadjoint projections 
$$\e=\{ \, E \in L(\h)   \, : \,  E^2=E, \, E^{\#}=E\, \}.$$
These facts allow to understand the relationship between $J$-normal projections and $J$-selfadjoint projections from a geometrical point of view: the map $F:\q \to \e$ defined by $F(Q)=QQ^\#$ is a real analytic submersion (Theorem \ref{q submanifold lh}). This kind of results can be seen as a contribution to the differential geometry of projections, which has been a subject of study in different settings, see e.g. \cite{corach por re90, corach por re93, pr87,  BR07b, andr lar, andruchow ch di 12}.

The last part of this paper deals with a topological description of the set of $J$-normal projections with a prescribed range.  
For a fixed pseudo-regular subspace $\s$ of $\h$, denote by $\q_{\s}$ the set of $J$-normal projections with range $\s$, that is,
$$  
\q_{\s} = \{ \, Q \in \q  \, : \, R(Q)=\s\, \}.  
$$
Unless $\s$ is regular, $ \q_{\s}$ has infinitely many elements. If the isotropic part  $\s^\circ$ is non trivial, each complement $\mathcal{M}$ in the decomposition $\s=\mathcal{M}[\dot{+}]\s^{\circ}$ is regular.  Thus, $\q_{\s}$ can be decomposed as the disjoint union of the decks  
\[   \q_{\s , \mathcal{M}}=\{  \,Q \in \q_{\s} \,  : \, R(QQ^{\#})=\mathcal{M}  \,\}, \]
where $\mathcal{M}$ is any (regular) complement of $\s^{\circ}$ in $\s$. The group  $\u_{\s}$ of all $J$-unitary operators  
leaving $\s$ invariant, acts transitively on $\q_{\s}$ by conjugation. Moreover, the action has the following remarkable property: the restriction to $\u_{\s}$ of the map defined in \eqref{mapa action} admits a global continuous cross section (Proposition \ref{sec global qs}). This is the key to prove that  $\q_{\s}$ is a covering space of any of the decks $\q_{\s , \mathcal{M}}$ (Theorem \ref{cov spa fixed range}).  

The contents of this paper are as follows. Section 2 contains notation and preliminaries on Krein spaces. Section 3 has the construction of the continuous local cross section for the natural action of $\uj$ on $\q$. The differential structure of $\q$  is developed in Section 4. Finally, Section 5 presents the covering space structure of the $J$-normal projections with a prescribed range. 

\section{Preliminaries}

Let $(\h, \PI{\,}{\,})$ be a complex separable Hilbert space. If $\k$ is another Hilbert space, $L(\h,\k)$ stands for the vector space of bounded linear operators from $\h$ to $\k$. In particular, $L(\h)$ is the algebra of bounded operators  on $\h$. 
If $T \in L(\h)$, $T^*$ is the adjoint of $T$. The range  and the nullspace of $T$ are denoted by  $R(T)$ and $N(T)$, respectively. 
The spectrum of $T$ is denoted by $\sigma(T)$. 

Throughout this paper, $J$ is a fixed symmetry acting on $\h$ (i.e. $J=J^*=J^{-1}$), which defines a
fundamental decomposition $\h=\h_+ \oplus \h_-$ given by $\h_\pm=N(J\mp I)$. This symmetry induces a Krein space structure $(\h,\PK{\,}{\,})$, where
\[  \PK{f}{g}=\PI{Jf}{g}, \, \, \, \, \, \, \, f,g \in \h.   \] 
The orthogonal projection onto $\h_{\pm}$ is denoted by $P_{\pm}$.
For a detailed exposition of the facts below, and a deeper discussion on Krein spaces see \cite{ando79, azizov iok89, bognar}.

A vector $f \in \h$ is $J$-positive if $\PK{f}{f}>0$. A subspace $\s$ of $\h$ is $J$-positive if every nonzero vector $f \in \s$ is $J$-positive.  A subspace $\s$ is called uniformly $J$-positive if there is  a constant $c>0$ such that $\PK{f}{f}\geq c \|f\|^2$ for every $f \in \s$. 
A $J$-positive (resp. uniformly $J$-positive) subspace is said to be maximal if it is not properly contained in a larger $J$-positive (resp. uniformly $J$-positive) subspace.  Similarly, one can define $J$-nonnegative, $J$-neutral, $J$-negative and  uniformly $J$-negative subspaces.

For each $J$-positive subspace $\s$ of $\h$, the angular operator $K: P_{+} (\s) \to \h_-$ is defined by $K(P_+ f)=P_- f$. It is a contraction ($\|K\|\leq 1$) and its graph coincides with $\s$: 
\[
Gr(K)\simeq \{ \, f + Kf \, : \, f \in P_{+} (\s) \, \}=\s.
\] 
Moreover, $K$ is a uniform contraction ($\|K\|< 1$) if and only if $\s$ is uniformly $J$-positive. If $\s$ is maximal (in the corresponding class of subspaces) the $P_+ (\s)=\h_+$. Observe that the angular operator can also be defined for $J$-negative subspaces in the obvious way. 

 Let $\s$ be a subspace of $\h$. The $J$-orthogonal subspace of $\s$ in $\h$ is defined by 
\[   \s^{[\perp]}=\{ \, f \in \h \, : \,  \PK{f}{g}=0 \ \text{for every  $g \in \s$}  \,  \}. \]
The isotropic part of $\s$ is given by $\s^{\circ}:=\s \cap \s^{[\perp]}$. In general, it is a non-trivial subspace. A subspace $\s$  is $J$-non-degenerate if $\s \cap \s^{[\perp]}=\{ 0\}$. Otherwise, it is a $J$-degenerated subspace. If $\mathcal{T}$ is another subspace of $\h$, $\s\dot{+}\mathcal{T}$ stands for the direct sum of the subspaces, meanwhile $\s[\dot{+}]\mathcal{T}$ is the $J$-orthogonal (direct) sum of them.

Given $T \in L(\h)$, the $J$-adjoint operator of $T$ is defined by $T^{\#}=JT^*J$. An operator $T$ is $J$-selfadjoint if $T^{\#}=T$, and it is $J$-antihermitian if $T^{\#}=-T$.

\subsection{The $J$-unitary group}

A $J$-unitary operator $U$ is a surjective isometry respect to the indefinite inner product, i.e. an operator satisfying $\K{Uf}{Uf}=\K{f}{f}$ for every $f\in\h$. Observe that it is possible to find unbounded $J$-unitary operators in Krein spaces, see e.g. \cite{G88} and the references therein. Along this work, only bounded $J$-unitary operators are considered. Then, $U \in L(\h)$ is $J$-unitary  if and only if $UU^{\#}=U^{\#}U=I$. The group of all (bounded) $J$-unitary operators is denoted by $\uj$.

\begin{rem}\label{B L subg} Let $Gl(\h)$ denote the group of invertible operators. 
The group of bounded  $J$-unitary operators can be rewritten as
\begin{equation}\label{subg simetrias}
  \uj = \{ \, U \in Gl(\h)  \, : \, U^*JU=J \, \}.   
\end{equation}  
It was mentioned in \cite[Section 23]{upmeier85} that this set is a real Banach-Lie subgroup of 
$Gl(\h)$. In fact $\uj$ turns out to be a real algebraic subgroup of $Gl(\h)$ and, by \cite[Theorem  7.14]{upmeier85}, $\uj$ is a Banach-Lie group endowed with the operator norm topology.

Its Lie algebra $\mathfrak{u}_J$ can be identified with the subspace of $J$-antihermitian operators, i.e.
\[  \mathfrak{u}_J=\{ \, X \in L(\h)  \, : \, X^\#=-X \,  \}.  \]
\end{rem}

When  the Hilbert space $\h$ is considered over a general field,  subgroups of $Gl(\h)$ defined as in (\ref{subg simetrias}) are not necessarily connected. However, if $\h$ is a complex Hilbert space,  $\uj$ is connected. This fact seems to be well-known, but no references could be found by the authors. A proof is included below based on the following well-known description of $J$-unitaries acting on Krein spaces, see e.g. \cite{ando79}.  

\begin{teo}\label{ando caracteriz}
Let $\s$ be a maximal uniformly $J$-positive subspace with angular operator $K$. Then, for any choice of unitary operators $V_+$ on $\h_+$ and $V_-$ on $\h_-$ the block operator matrix $U$ with respect to the decomposition $\h=\h_+\oplus\h_-$ given by
\[ U = \left( \begin{array}{ll} (I_+ - K^*K)^{-1/2} V_+ & K^*(I_- - KK^*)^{-1/2}V_- \\ K(I_+ - K^*K)^{-1/2} V_+ & (I_- - KK^*)^{-1/2}V_-  \end{array} \right)\]
is $J$-unitary and transforms $\h_+$ onto $\s$. Conversely, every $J$-unitary operator that maps $\h_+$
onto $\s$ is of this form.
\end{teo}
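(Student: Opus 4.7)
The plan is to handle the two directions separately; both hinge on the functional-calculus intertwining identity
\[
K(I_+ - K^*K)^{-1/2}=(I_- - KK^*)^{-1/2}K,
\]
which follows from the spectral theorem via $Kp(K^*K)=p(KK^*)K$ applied to $p(x)=(1-x)^{-1/2}$ on $\sigma(K^*K)\subset[0,1)$. For the forward direction I would verify $U^*JU=J$ by expanding the block product against $J=\mathrm{diag}(I_+,-I_-)$; each of the three independent identities collapses to the intertwining relation together with the unitarity of $V_+$ and $V_-$. Invertibility of $U$ is immediate from the factorisation
\[
U=\left(\begin{array}{cc} I_+ & K^* \\ K & I_- \end{array}\right)\left(\begin{array}{cc} (I_+-K^*K)^{-1/2}V_+ & 0 \\ 0 & (I_--KK^*)^{-1/2}V_- \end{array}\right),
\]
whose right factor is invertible by construction and whose left factor is invertible because its Schur complement (of $I_-$) equals $I_+-K^*K$. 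Reading off the first column of $U$ then gives $Uh=g+Kg$ with $g=(I_+-K^*K)^{-1/2}V_+h$, and so $U(\h_+)=\s$.

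For the converse, take $U\in\uj$ with $U(\h_+)=\s$ and write $U=\left(\begin{smallmatrix} A & B \\ C & D\end{smallmatrix}\right)$ relative to $\h_+\oplus\h_-$. Since $U$ preserves $J$-orthogonality, $U(\h_-)=U(\h_+^{[\perp]})=\s^{[\perp]}$, which is a maximal uniformly $J$-negative subspace. A direct inner-product computation imposing $\K{f+Kf}{K^*g+g}=0$ for all $f\in\h_+$, $g\in\h_-$ shows that its angular operator is $K^*\colon\h_-\to\h_+$, so $\s^{[\perp]}=\{K^*g+g:g\in\h_-\}$. The first column of $U$ lies in $\s=\{f+Kf:f\in\h_+\}$, forcing $C=KA$; the second column lies in $\s^{[\perp]}$, forcing $B=K^*D$.

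To pin down $A$ and $D$, I would expand $U^{\#}U=I$ (with $U^{\#}=JU^*J$); the diagonal blocks yield
\[
A^*(I_+-K^*K)A=I_+,\qquad D^*(I_--KK^*)D=I_-.
\]
Maximality of $\s$ gives $P_+(\s)=\h_+$ (a fact recalled in the preliminaries), so $A=P_+U|_{\h_+}$ is surjective and hence invertible; thus $(I_+-K^*K)^{1/2}A$ is an invertible isometry, i.e.\ a unitary $V_+$ on $\h_+$, and $A=(I_+-K^*K)^{-1/2}V_+$. The symmetric argument on $\h_-$, using that $\s^{[\perp]}$ is maximal uniformly $J$-negative, provides a unitary $V_-$ with $D=(I_--KK^*)^{-1/2}V_-$. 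Substituting back into $C=KA$ and $B=K^*D$ reproduces the displayed block form, and the remaining off-diagonal identity $A^*B=C^*D$ reduces once more to the intertwining relation. The main point requiring care is the identification of the angular operator of $\s^{[\perp]}$ as $K^*$ together with the invertibility of $A$ extracted from maximality; everything else is routine block algebra.
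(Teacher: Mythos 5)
The paper does not prove this theorem at all: it is quoted from Ando's notes \cite{ando79} as a known description of $J$-unitaries and then used as a black box in the proof of Proposition \ref{connected uj}, so there is no internal argument to compare with. Your proposal is the standard proof and it is correct. In the forward direction, verifying $U^*JU=J$ together with invertibility of $U$ (your factorization, whose left factor is invertible because $\|K\|<1$) is exactly membership in $\uj$ as defined in \eqref{subg simetrias}, and the first-column computation gives $U(\h_+)=\s$ since $(I_+-K^*K)^{-1/2}V_+$ is invertible on $\h_+$. In the converse, the reductions $C=KA$, $B=K^*D$ and the identities $A^*(I_+-K^*K)A=I_+$, $D^*(I_--KK^*)D=I_-$ are right, and extracting the unitaries $V_\pm$ works. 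Three points deserve a sentence each. (i) The computation $\K{f+Kf}{K^*g+g}=0$ only shows that the graph of $K^*$ is \emph{contained} in $\s^{[\perp]}$; to get equality you must either invoke the standard fact that $\s^{[\perp]}$ is maximal uniformly $J$-negative (so it is the graph of its angular operator defined on all of $\h_-$, which your computation then identifies as $K^*$), or argue directly that the graph of $K^*$ is a direct complement of $\s$ contained in $\s^{[\perp]}$ while $\s\cap\s^{[\perp]}=\{0\}$; as written you lean on the first alternative, which is fine but should be said explicitly since the same maximality is reused for the surjectivity of $D$. (ii) ``Surjective and hence invertible'' for $A$ is incomplete as stated; injectivity (indeed boundedness below) is immediate from $A^*(I_+-K^*K)A=I_+$, and likewise for $D$, so this is a one-line fix rather than a gap. (iii) After substituting $C=KA$, $B=K^*D$, the off-diagonal equation $A^*B=C^*D$ holds identically, and in fact the intertwining identity $K(I_+-K^*K)^{-1/2}=(I_--KK^*)^{-1/2}K$ is not needed in the $U^*JU=J$ computation at all (it would be needed only if one checked $UJU^*=J$ instead). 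None of these observations affects the validity of your argument.
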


\begin{prop}\label{connected uj}
The Banach-Lie group $\uj$ is (arcwise) connected.
\end{prop}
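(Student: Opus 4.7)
The plan is to use Theorem \ref{ando caracteriz} to parameterize every element of $\uj$ and then explicitly deform it to the identity. First I would observe that any $U\in\uj$ sends $\h_+$ onto a maximal uniformly $J$-positive subspace $\s=U(\h_+)$: indeed $U$ preserves the indefinite inner product, so $\s$ is maximal $J$-positive, and uniform positivity follows from the boundedness of $U^{-1}=U^{\#}$, since $\K{Uf}{Uf}=\|f\|^2 \ge \|U^{-1}\|^{-2}\|Uf\|^2$ for $f\in\h_+$. Therefore Theorem \ref{ando caracteriz} applies and $U$ admits the block representation with angular operator $K$ (with $\|K\|<1$) and unitaries $V_+$ on $\h_+$, $V_-$ on $\h_-$.

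Next I would build the first leg of the path by scaling the angular operator. Define
\[
U(t) \;=\; \begin{pmatrix} (I_+ - t^2K^*K)^{-1/2} V_+ & tK^*(I_- - t^2KK^*)^{-1/2}V_- \\ tK(I_+ - t^2K^*K)^{-1/2} V_+ & (I_- - t^2KK^*)^{-1/2}V_-  \end{pmatrix}
\]
for $t\in[0,1]$. Since $\|tK\|\le\|K\|<1$, the operators $(I_+-t^2K^*K)^{-1/2}$ and $(I_--t^2KK^*)^{-1/2}$ are well defined via holomorphic functional calculus and depend norm-continuously on $t$; for each $t$ the angular operator of the range of the corresponding maximal $J$-positive subspace is $tK$, so $U(t)\in\uj$ by Theorem \ref{ando caracteriz}. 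Thus $U(\,\cdot\,)$ is a continuous path in $\uj$ from $U(1)=U$ to $U(0) = V_+\oplus V_-$ (block-diagonal with respect to the fundamental decomposition).

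For the second leg, I would connect $V_+\oplus V_-$ to the identity $I=I_+\oplus I_-$. The unitary group of a complex Hilbert space is arcwise connected (one can exhibit explicit paths via the spectral theorem or the exponential map, writing $V_\pm = e^{iH_\pm}$ with $H_\pm$ bounded selfadjoint and taking $t\mapsto e^{itH_\pm}$). Any such paths $V_\pm(t)$ yield $V_+(t)\oplus V_-(t)$, which is $J$-unitary for every $t$ (block-diagonal operators with unitary diagonal blocks commute with $J$ and satisfy $U^*JU=J$). Concatenating the two continuous paths provides an arc in $\uj$ from $U$ to $I$, proving arcwise connectedness.

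The only nontrivial point is the continuity of $t\mapsto U(t)$ at $t=0$ and $t=1$, but this is immediate because $I-s^2K^*K$ depends polynomially on $s$ and takes values in a region where the square root and inverse are holomorphic, so norm continuity of the whole expression is automatic. Once this is settled, the argument is complete and in fact produces a canonical smooth arc joining any $U\in\uj$ to $I$.
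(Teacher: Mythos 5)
Your proof is correct and follows essentially the same route as the paper: both apply Theorem \ref{ando caracteriz} to write $U$ with angular operator $K$ and unitaries $V_\pm$, scale $K$ to $tK$, and use the exponential form $V_\pm=e^{X_\pm}$ to reach the identity. The only (cosmetic) difference is that the paper performs the two deformations simultaneously in a single curve $\gamma(t)$, whereas you concatenate two legs.
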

\begin{proof}
Let $U$ be a  $J$-unitary operator. It is not difficult to see that $U\h_+$ is  a maximal uniformly $J$-positive subspace. By Theorem \ref{ando caracteriz}, $U$ can be written in the form 
\[ U = \left( \begin{array}{ll} (I_+ - K^*K)^{-1/2} V_+ & K^*(I_- - KK^*)^{-1/2}V_- \\ K(I_+ - K^*K)^{-1/2} V_+ & (I_- - KK^*)^{-1/2}V_-  \end{array} \right), \]
where $K$ is the angular operator of $U\h_+$. Here $V_+$ and $V_-$ are unitary operators on $\h_+$ and $\h_-$ respectively. Then,  there exist antihermitian operators $X_+$  acting on $\h_+$ and $X_-$ acting on $\h_-$ such that $V_{+}=e^{X_+}$ and $V_-=e^{X_-}$. Notice that the operators of the form $e^{tX_{\pm}}$ are unitaries on $\h_{\pm}$ for  $t \in \RR$, and $tK$ is a uniform contraction for  $t \in [0,1]$. For each $t \in [0,1]$, the uniform contraction $tK$ is uniquely associated to a maximal uniformly $J$-positive subspace, see \cite[Corollary 1.1.2]{ando79}. 
Therefore, by Theorem \ref{ando caracteriz}, the curve $\gamma: [0,1]\ra L(\h)$ given by
\[   \gamma(t)= \left( \begin{array}{ll} (I_+ - t^2K^*K)^{-1/2} e^{tX_+} & tK^*(I_- - t^2KK^*)^{-1/2}e^{tX_-} \\ tK(I_+ - t^2K^*K)^{-1/2} e^{tX_+} & (I_- - t^2KK^*)^{-1/2}e^{tX_-}  \end{array} \right), \]
takes values on $\uj$. Moreover, this curve is clearly continuous, and it joins $\gamma(0)=I$ with $\gamma(1)=U$. Thus, every $J$-unitary operator  can be joined by means of  a continuous curve  with the identity. 
Hence $\uj$ is arcwise connected.   
\end{proof}

\begin{rem}\label{log cerca de 1} 
The exponential map $\exp: \mathfrak{u}_J \to \uj$ is given by $\exp(X)=e^X$. It is always a local diffeomorphism. A surjectivity radius of the exponential map can be estimated as follows: 

Let $U \in \uj$ such that $\|U-I\|<1$.  Consider the principal branch of the logarithm, i.e. $\log: \CC\setminus \RR^-\ra \CC$ given by $\log(z)=\log(|z|) + i\theta$, where $z=|z|e^{i\theta}$, $\theta\in (-\pi,\pi)$. Since every $\lambda \in  \sigma(U)$ satisfies $|\lambda - 1|<1$, the logarithm of $U$ can be defined by the analytic functional calculus:
\[
\log(U)=\frac{1}{2\pi i}\int_\Gamma \log(z)(zI-U)^{-1}\ dz,
\]
where $\Gamma$ is a suitable Jordan contour in the resolvent set of $U$ surrounding $\sigma(U)$. 

Observe that $\sigma((U^*)^{-1})=\{\ol{\lambda^{-1}}: \ \lambda\in\sigma(U)\}$ is also contained in the right-half plane. Then there exist $0<\vareps<M$ and $N>0$ such that $\sigma(U)\cup\sigma((U^*)^{-1})$ is contained in the rectangle $[\vareps,M]\x [-N, N]$. Let $\Gamma$ be the border of this rectangle. 
Since $U\in\uj$, it follows that $JU=(U^*)^{-1}J$ and, given $z\in\CC$, 
$
(zI-U)^{-1}J=(J(zI-U))^{-1}=((zI-(U^*)^{-1})J)^{-1}=J(zI-(U^*)^{-1})^{-1}.
$
Thus, 
\begin{eqnarray*}
\log(U)J &=& \frac{1}{2\pi i}\int_\Gamma \log(z)(zI-U)^{-1}J\ dz \\
&=& J\left(\frac{1}{2\pi i}\int_\Gamma \log(z)(zI-(U^*)^{-1})^{-1} dz\right) =J \log((U^*)^{-1}).
\end{eqnarray*}
Note that $f(z):=\log(\tfrac{1}{z})$ is an analytic function in the right-half plane satisfying $f(z)=-log(z)$. Then, $\log(U)J= J\log((U^*)^{-1})=-J\log(U^*)=-J\log(U)^*$.
Set $X=\log(U)$. By the above computation $X$ is $J$-antihermitian and $e^X=U$.  Hence, every operator $U$ satisfying $\|U-I\|< 1$ has a logarithm in $\mathfrak{u}_J$.
\end{rem}

\subsection{Regular and pseudo-regular subspaces} 

A (closed) subspace $\s$ of a Krein space $\h$  is called \textit{regular} if $\s \, [\dot{+}]\, \s^{[\perp]}=\h$. Equivalently, $\s$ is regular if and only if there exists a (unique) $J$-selfadjoint projection $E$ such that $R(E)=\s$ (see e.g. \cite[Ch. 1, Thm. 7.16]{azizov iok89}). Thus, the set of regular subspaces is in bijective correspondence with the set of $J$-selfadjoint projections, namely, 
\[  \e=\{ E \in L(\h) \, : \,  E^2=E, \, E^{\#}=E\,   \}. \]
The following criterion will be useful: $\s$ is a regular subspace if and only if $\s=\m \, [\dot{+}] \, \n$, where 
$\m$ is a uniformly $J$-positive subspace and $\n$ is a uniformly $J$-negative subspace (see \cite[Theorem 1.3]{ando79}). 

A closed subspace $\s$ of $\h$  is called \textit{pseudo-regular} if there exists a regular subspace $\m$ such that $\s=\s^\circ \, [\dot{+}]\, \m$.  Equivalently, $\s$ is pseudo-regular  if the algebraic sum $\s \, +\, \s^{[\perp]}$ is closed. In \cite{maestripieri martinez13} it was shown that a subspace $\s$ is pseudo-regular if and only if $\s$ is the range of a $J$-normal projection, i.e. there exists a projection $Q\in L(\h)$ with $R(Q)=\s$ such that $QQ^\#=Q^\#Q$. 

The following results also belong to  \cite{maestripieri martinez13}. Their statements are included in order to make the paper self-contained.

\begin{prop}\label{desc}
Given a projection $Q \in L(\h)$, $Q$ is $J$-normal if and only if there exist a projection $E \in \e$ and a projection $P \in L(\h)$ satisfying $PP^{\#}=P^{\#}P=0$ such that 
\[ Q=E+P.\]
The projections $E$ and $P$ are uniquely determined by $Q$. 
\end{prop}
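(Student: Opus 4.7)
The plan is to prove both implications constructively using the explicit candidate $E := QQ^{\#}$ and $P := Q - QQ^{\#}$, which will simultaneously yield uniqueness.

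For the forward implication, starting from a $J$-normal projection $Q$, set $E := QQ^{\#}$. The identity $E^{\#} = (QQ^{\#})^{\#} = QQ^{\#} = E$ is immediate. For idempotency, I would compute
\[ E^2 = QQ^{\#}QQ^{\#} = Q(Q^{\#}Q)Q^{\#} = Q(QQ^{\#})Q^{\#} = Q^2 (Q^{\#})^2 = QQ^{\#} = E, \]
where the third equality is exactly $J$-normality and the last uses $Q^2 = Q$ together with $(Q^{\#})^2 = Q^{\#}$. Hence $E \in \e$. Setting $P := Q - E$, I would then expand $P^2$, $PP^{\#}$, and $P^{\#}P$ and collapse terms by means of $Q^2 = Q$, $(Q^{\#})^2 = Q^{\#}$, and the commutation $QQ^{\#} = Q^{\#}Q$. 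Each of these is a routine algebraic check that yields $P^2 = P$ and $PP^{\#} = P^{\#}P = 0$.

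For the converse, assume $Q = E + P$ satisfies the stated hypotheses. Expanding $Q^2 = Q$ yields
\[ EP + PE = 0. \]
The key algebraic step is to upgrade this to the stronger conclusion $EP = PE = 0$. Multiplying the identity on the left by $E$ and using $E^2 = E$ gives $EP = -EPE$; multiplying on the right by $E$ gives $PE = -EPE$; hence $EP = PE$, which combined with $EP + PE = 0$ forces $EP = PE = 0$. Taking $\#$-adjoints then gives $EP^{\#} = P^{\#}E = 0$. Substituting into
\[ QQ^{\#} = E^2 + EP^{\#} + PE + PP^{\#}, \qquad Q^{\#}Q = E^2 + EP + P^{\#}E + P^{\#}P, \]
both expressions collapse to $E$, so $QQ^{\#} = Q^{\#}Q$ and $Q$ is $J$-normal. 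Uniqueness is free: the same computation shows that \emph{any} such decomposition satisfies $E = QQ^{\#}$ and consequently $P = Q - QQ^{\#}$.

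The main subtle point is not so much an obstacle as a small algebraic lemma: extracting the individual vanishing $EP = PE = 0$ from the sum $EP + PE = 0$ using only $E^2 = E$. Everything else is a direct expansion, and the candidate formula $E = QQ^{\#}$ is both natural (from the fact that $QQ^{\#}$ is forced to be a $J$-selfadjoint projection whenever $Q$ is $J$-normal) and strong enough to give existence, correctness, and uniqueness in one stroke.
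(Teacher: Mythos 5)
Your proof is correct and follows essentially the same route as the paper: the decomposition used there (recorded explicitly in the remark following the proposition) is exactly $E=QQ^{\#}$ and $P=Q(I-Q^{\#})=Q-QQ^{\#}$, which are your candidates, and your extraction of $EP=PE=0$ from $EP+PE=0$ together with the computation $QQ^{\#}=Q^{\#}Q=E$ gives both the converse and uniqueness correctly.
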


Projections $P\in L(\h)$ satisfying $PP^{\#}=P^{\#}P=0$ were previously considered in \cite{J81,GJ}, in connection with neutral dual companions. If $\s$ is a fixed (closed) $J$-neutral subspace of $\h$, a {\it neutral dual companion of $\s$} is another (closed) $J$-neutral subspace $\t$ of $\h$ such that $\h=\s  \dotplus \t^{\ort}$ holds. If $\t$ is a neutral dual companion of $\s$ then also $\h=\t  \dotplus \s^{\ort}$ holds. So, the pair of subspaces $(\s,\t)$ is called a {\it neutral dual pair}.

\begin{rem}\label{desc q y 1-q}
In the proof of the above mentioned result, the projections $E$ and $P$ are explicitly computed: $E=QQ^{\#}$ and $P=Q(I-Q^{\#})$. 
Furthermore, the decomposition for the $J$-normal projection $I-Q$ is given by 
\[ I-Q=F + P^{\#}, \]
where $F=(I-Q)(I-Q)^{\#}$. From these formulas, it is easy to see that $EP=PE=EP^\#=P^\#E=0$ and $FP=PF=FP^\#=P^\#F=0$. 

Also, it follows that  $R(P + P^\#)=R(Q)^{\circ} \dot{+} N(Q)^{\circ}$ is a regular subspace, and the Krein space $\h$ can be decomposed as the $J$-orthogonal sum of the following three
regular subspaces:
$$
 \h=R(E)\sdo R(P + P^{\#})\sdo R(F).
$$
\end{rem}

In the sequel, given a  a $J$-normal projection $Q\in L(\h)$, $E$, $F$ and $P$ stand for the projections $E=QQ^{\#}$, $P=Q(I-Q^{\#})$ and $F=(I-Q)(I-Q)^{\#}$. If $Q_0$ is another $J$-normal projection, $E_0$, $F_0$ and $P_0$ have the obvious meaning.

\section{The orbit of a $J$-normal projection}

\noi The set of $J$-normal projections is given by
\[  \q=\{ \, Q \in L(\h) \, : \, Q^2=Q, \, QQ^{\#}=Q^{\#}Q \, \}. \]
The Banach-Lie group $\uj$ acts smoothly on $L(\h)$ by conjugation. Clearly, the restriction of this action gives an action of  $\uj$ on $\q$ defined by 
$$U\cdot Q=UQU^{\#},$$ 
where $U \in \uj , \, \, Q \in \q$. It is worth pointing out that each orbit $\uj \cdot Q$ is connected in the norm topology (see Proposition \ref{connected uj}).
For the notion of real analytic homogeneous space in the following result see e.g. \cite{beltita, upmeier85}.

\begin{prop}\label{kernel sup}
Given $Q_0 \in \q$, the orbit $\uj \cdot Q_0$ is a real analytic homogeneous space of $\uj$.
\end{prop}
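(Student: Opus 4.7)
The plan is to realize $\uj \cdot Q_0 \cong \uj/\uj^{Q_0}$ as a quotient of Banach--Lie groups and invoke the standard quotient theorem (see \cite{beltita, upmeier85}); for that theorem to apply it suffices to verify that the isotropy subgroup $\uj^{Q_0}$ is a Banach--Lie subgroup of $\uj$ whose Lie algebra is closed and topologically complemented in $\mathfrak{u}_J$.

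First I would identify the isotropy. Since $U^\#=U^{-1}$ for every $U\in\uj$, one has
\[
\uj^{Q_0}=\{\,U\in\uj:UQ_0U^\#=Q_0\,\}=\{\,U\in\uj:UQ_0=Q_0U\,\},
\]
which is a closed subgroup of $\uj$. Its candidate Lie algebra is
\[
\mathfrak{u}_J^{Q_0}=\{\,X\in\mathfrak{u}_J:XQ_0=Q_0X\,\},
\]
clearly closed in $\mathfrak{u}_J$. Using the functional-calculus logarithm built in Remark \ref{log cerca de 1}, I would observe that since $\log U$ is defined as an integral of resolvents of $U$ it commutes with every operator commuting with $U$. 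Therefore whenever $U\in\uj^{Q_0}$ is close enough to $I$ one has $\log U\in\mathfrak{u}_J^{Q_0}$, and conversely $e^{X}\in\uj^{Q_0}$ for every $X\in\mathfrak{u}_J^{Q_0}$. This identifies $\uj^{Q_0}$ as a Banach--Lie subgroup of $\uj$ with Lie algebra $\mathfrak{u}_J^{Q_0}$.

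The main technical step is to exhibit a closed topological complement $\mathfrak{m}$ of $\mathfrak{u}_J^{Q_0}$ in $\mathfrak{u}_J$. I would exploit the $J$-orthogonal decomposition of $\h$ into the three regular subspaces from Remark \ref{desc q y 1-q},
\[
\h=R(E_0)\sdo R(P_0+P_0^\#)\sdo R(F_0),
\]
whose associated spectral projections $E_0,\ P_0+P_0^\#,\ F_0$ are all $J$-selfadjoint. In the induced $3\times 3$ block decomposition, $Q_0$ acts as the identity on the first summand, as zero on the third, and on the middle summand as the oblique neutral projection onto $R(P_0)$ along $R(P_0^\#)$; in particular $Q_0$ is block-diagonal. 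Writing an arbitrary $X\in L(\h)$ in blocks $(X_{ij})$, the commutation condition $[X,Q_0]=0$ turns into explicit linear equations on the blocks, and the antihermiticity $X^\#=-X$ respects the same decomposition precisely because the three defining projections are $J$-selfadjoint. The two constraints therefore decouple block-by-block and yield an explicit continuous linear projector $\pi:\mathfrak{u}_J\to\mathfrak{u}_J^{Q_0}$; its kernel is the desired complement $\mathfrak{m}$.

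With $\mathfrak{u}_J=\mathfrak{u}_J^{Q_0}\oplus\mathfrak{m}$ topologically, the quotient theorem gives $\uj/\uj^{Q_0}$ the structure of an analytic manifold for which the projection $\uj\to\uj/\uj^{Q_0}$ is an analytic principal bundle, and transporting along the orbit bijection $U\uj^{Q_0}\mapsto UQ_0U^\#$ equips $\uj\cdot Q_0$ with the required structure of real analytic homogeneous space of $\uj$. The hard part will be the construction of $\mathfrak{m}$: the commutation with the $J$-normal (but in general not $J$-selfadjoint) projection $Q_0$ interacts nontrivially with the antihermiticity constraint, and it is the finer decomposition of $\h$ into three $J$-regular summands, whose summand-projections \emph{are} $J$-selfadjoint, that reconciles the two constraints and makes the splitting continuous.
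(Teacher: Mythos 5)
Your proposal is correct and follows essentially the same route as the paper: identify the isotropy group $\{U\in\uj : UQ_0=Q_0U\}$, use the functional-calculus logarithm of Remark \ref{log cerca de 1} to verify the Banach--Lie subgroup condition, complement the isotropy Lie algebra via the decomposition $\h=R(E_0)\sdo R(P_0+P_0^{\#})\sdo R(F_0)$ of Remark \ref{desc q y 1-q}, and invoke the quotient theorem of \cite{upmeier85}. The only difference is that the paper makes your block-diagonal projector explicit, namely $\p(X)=E_0XE_0+P_0XP_0+P_0^{\#}XP_0^{\#}+F_0XF_0$, using antihermiticity (commutation with $Q_0$ forces commutation with $Q_0^{\#}$) exactly as you anticipate in your closing remark.
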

\begin{proof}
Clearly, there is a bijection from $\uj \cdot Q_0$ onto $\uj / \mathcal{G}$, where $\mathcal{G}$ is the isotropy group at $Q_0$, i.e. 
\[ \mathcal{G}= \{ \, U \in \uj \, : \, UQ_0=Q_0U \, \}.  \]
Observe that the Lie algebra of $\mathcal{G}$ can be identified with
\[  \mathfrak{g}=  \{ \, X \in \lie \, : \, XQ_0=Q_0X \, \}.   \]
Then, the conclusion of this proposition will follow if $\mathcal{G}$ is a Banach-Lie subgroup of $\uj$.
This last fact is equivalent to show that $\mathcal{G}$ is a Banach-Lie group in the norm topology of $L(\h)$ and $\mathfrak{g}$ is a closed complemented subspace of $\lie$. In this case, $\uj / \mathcal{G}$ has an analytic manifold structure endowed with the quotient topology (see e.g. \cite[Theorem 8.19]{upmeier85}). 

Let $\mathcal{V}=\exp^{-1}(B_1 (I))$, where $B_1 (I)$ is the open unit ball around  the identity contained in $\uj$. Given $U \in \exp(\mathcal{V}) \cap \mathcal{G}$, there exists $X \in \mathcal{V}$ such that $U=e^X$.  

Notice that the logarithm $X \in \lie$, which is computed in Remark \ref{log cerca de 1}, is unique. Indeed, 
if $\| U -I\|<1$ then $\sigma(U)\subset \RR + i (-\pi, \pi)$. But in the latter set the complex exponential is bijective, so the exponential in $L(\h)$ is also bijective  by well-known properties of the functional analytic calculus. Now recall that $X=\frac{1}{2\pi i}\int_\Gamma \log(z)(zI-U)^{-1}dz$. If the operator $U$ belongs to $\mathcal{G}$, that is $UQ_0=Q_0U$, then by standard arguments one can see that $XQ_0=Q_0X$. Thus, $X \in \mathfrak{g}$. This shows that $\exp(\mathcal{V})\cap \mathcal{G}\subseteq \exp(\mathcal{V} \cap \mathfrak{g})$. Since the reversed inclusion is always trivial, it follows that 
$\exp(\mathcal{V})\cap \mathcal{G}=\exp(\mathcal{V} \cap \mathfrak{g})$. Hence $\mathcal{G}$ is a Banach-Lie group in the norm topology of $L(\h)$.

Note that $\mathfrak{g}$ is closed in $\lie$. To prove that  $\mathfrak{g}$ is complemented in $\lie$, consider the map 
\begin{equation}\label{Pmap}
\p: L(\h)\ra L(\h), \, \, \, \, \p(X)=E_0XE_0 + P_0XP_0 + P_0^{\#}XP_0^{\#} + F_0XF_0.
\end{equation}
 By the relations between the projections $E_0$, $F_0$, $P_0$ and $P_0^{\#}$ pointed out in Remark \ref{desc q y 1-q},  it follows that $\p$ is a  continuous projection satisfying $\p(\mathfrak{u}_J)\subseteq \mathfrak{u}_J$. 
Also, notice that 
$
Q_0\p(X)=E_0XE_0 + P_0XP_0=\p(X)Q_0$.
 Then, $\p(\mathfrak{u}_J)\subseteq \mathfrak{g}$. To prove the reversed inclusion, pick $X\in \mathfrak{g}$, i.e. $X\in\mathfrak{u}_J$ and $XQ_0=Q_0X$. Observe that $X$ also commutes with $Q_0^{\#}$. Therefore, $X$ commutes with $E_0$, $F_0$, $P_0$ and $P_0 ^{\#}$, so that
\[
\p(X)=E_0XE_0 + P_0XP_0 + P_0^\#XP_0^\# + F_0XF_0= (E_0 + P_0 + P_0^\# + F_0)X=X.
\]
The latter means that $X\in \p(\mathfrak{u}_J)$, and consequently, $\p(\mathfrak{u}_J)= \mathfrak{g}$. 

To finish the proof, note that the map $h :L(\h) \to \mathfrak{u}_J$ given by $h(X)=\tfrac{X-X^\#}{2}$ is a continuous real projection. Therefore the map 
$\p\circ h$ is a continuous real projection onto $\mathfrak{g}$. Hence, $\mathfrak{g}$ is complemented in $L(\h)$.
\end{proof}

According to the above proposition, the orbit $\uj \cdot Q_0$ has a Banach manifold structure such that 
the canonical projection 
$$p_{Q_0}:\uj \to \uj \cdot Q_0, \, \, \, \, p_{Q_0}(U)=UQ_0U^{\#}$$ 
is a real analytic submersion. This manifold structure defines on  $ \uj \cdot Q_0 \simeq \uj / \mathcal{G}$  the quotient topology. On the other hand, $\uj \cdot Q_0$ is endowed with the relative topology as a subset of $L(\h)$. If one considers the identity map $Id: \uj \cdot Q_0 \simeq \uj /\mathcal{G} \to  \uj \cdot Q_0 \subseteq L(\h) $, it is easy to see that this map is always continuous. However,  it may fail to be a homeomorphism. 
To see that in this setting it is actually a homeomorphism, it will be sufficient to prove that
 $p_{Q_0}$ admits local continuous cross sections when $\uj \cdot Q_0$ is endowed with the relative topology of $L(\h)$. 

To this end, recall that in Remark \ref{desc q y 1-q}, it is stated that $\h$ can be written as the $J$-orthogonal sum of three regular subspaces
\[   \h=R(E_0)[\dot{+}]R(P_0 + P_0^{\#})[\dot{+}]R(F_0). \]
Let $Q$ be another $J$-normal projection sufficiently close to $Q_0$. The space $\h$ can   also be decomposed as $\h=R(E)[\dot{+}]R(P + P^{\#})[\dot{+}]R(F)$. Therefore, the problem of finding a $J$-unitary that maps $Q_0$ in $Q$ can be reduced to find $J$-isometric isomorphisms mapping $R(E_0)$ onto $R(E)$, $R(F_0)$ onto $R(F)$ and $R(P_0+P_0^{\#})$ onto $R(P+P^{\#})$. It is worth pointing out that $R(P_0)$ has to be mapped onto $R(P)$, and obviously, the $J$-unitary has to depend continuously on $Q$. 

This work is carried out in the next two subsections. The first one deals with the case of $J$-selfadjoint projections, and the second subsection treats the general case.

\subsection{A local continuous cross section. The $J$-selfadjoint case.}

Observe that the group $\uj$ also acts on $\e$ by conjugation: $U \cdot E=UEU^{\#}$, where $U \in \uj$ and $E \in \e$. 

\begin{prop}\label{section j seladj}
The map $p_{E_0}:\uj \to \uj \cdot \e$ given by $p_{E_0}(U)=UE_0U^{\#}$ has local continuous cross sections.
\end{prop}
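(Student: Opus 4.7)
The plan is to adapt the Corach--Porta--Recht construction of a continuous section for the space of projections to the Krein-space setting, combined with a polar-type correction so that the resulting operator lies in $\uj$. For $E$ in a norm-neighborhood of $E_0$ inside $\uj\cdot E_0$, define
$$S(E)=EE_0+(I-E)(I-E_0).$$
A direct computation gives $S(E)E_0=ES(E)$, so $S(E)$ carries $R(E_0)$ onto $R(E)$ and $N(E_0)$ onto $N(E)$; since $S(E_0)=I$, the operator $S(E)$ is invertible for $E$ close enough to $E_0$ and already conjugates $E_0$ to $E$. What remains is to modify $S(E)$ into a $J$-unitary without destroying this property.

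Since $E_0^\#=E_0$ and $E^\#=E$, one has $S(E)^\#=E_0E+(I-E_0)(I-E)$, so $S(E)^\#S(E)$ is automatically $J$-selfadjoint. A short expansion using $E(I-E)=E_0(I-E_0)=0$ yields
$$S(E)^\#S(E)=E_0EE_0+(I-E_0)(I-E)(I-E_0),$$
and a further computation shows that $E_0$ commutes with $S(E)^\#S(E)$. Moreover $S(E)^\#S(E)$ is close to $I$ when $E$ is close to $E_0$, so its spectrum sits in a small disk around $1$; the principal branch of $z^{-1/2}$ therefore defines, via the analytic functional calculus, a $J$-selfadjoint operator $(S(E)^\#S(E))^{-1/2}$ depending analytically on $E$. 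The proposed cross section is
$$\sigma(E):=S(E)\,\bigl(S(E)^\#S(E)\bigr)^{-1/2}.$$

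Verification of the three required properties is then straightforward. First, $\sigma(E)\in\uj$, since
$$\sigma(E)^\#\sigma(E)=(S^\#S)^{-1/2}(S^\#S)(S^\#S)^{-1/2}=I,\qquad \sigma(E)\sigma(E)^\#=S(S^\#S)^{-1}S^\#=I.$$
Second, using that $E_0$ commutes with $(S^\#S)^{-1/2}$ and that $SE_0=ES$,
$$p_{E_0}(\sigma(E))=\sigma(E)E_0\sigma(E)^\#=SE_0(S^\#S)^{-1}S^\#=ES(S^\#S)^{-1}S^\#=E.$$
Third, continuity of $\sigma$ follows from continuity of each factor. Local sections around any other point of the orbit are then obtained by translating along the action of $\uj$.

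The step I expect to require the most care is showing that $(S(E)^\#S(E))^{-1/2}$ is $J$-selfadjoint, or equivalently, that the analytic functional calculus commutes with the involution $T\mapsto T^\#$ when applied to $J$-selfadjoint operators. This reduces to the identity $\overline{f(\bar z)}=f(z)$ for the principal branch of $z^{-1/2}$ on its domain, combined with a contour-integral argument analogous to the one already carried out for the logarithm in Remark~\ref{log cerca de 1}; once this is in hand, every remaining computation is a direct algebraic verification.
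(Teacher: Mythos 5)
Your construction is correct, but it follows a genuinely different route from the paper. The paper's proof leans on the Corach--Porta--Recht geometry of the full set of projections: for $E$ close to $E_0$ it takes the co-diagonal logarithm $X_E$ with $e^{X_E}E_0e^{-X_E}=E$ (a local diffeomorphism borrowed from \cite{corach por re93}), and then shows $e^{X_E}\in\uj$ by passing to the symmetries $R_E=2E-I$, using that co-diagonal operators anticommute with $R_{E_0}$ to get $(e^{2X_E})^{\#}=(e^{2X_E})^{-1}$, and finally invoking Remark \ref{log cerca de 1} to conclude $X_E\in\lie$. You instead work with the explicit intertwiner $S(E)=EE_0+(I-E)(I-E_0)$ and correct it by the ``$J$-polar'' factor $\bigl(S(E)^{\#}S(E)\bigr)^{-1/2}$ defined through the analytic functional calculus; this is precisely the Krein-space analogue of the paper's own Lemma \ref{section on p}, where the Hilbert-space unitary group acts on orthogonal projections and the correction is $|S^*|^{-1}$. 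Your algebra checks out: $S(E)E_0=ES(E)$, $S(E)^{\#}S(E)=E_0EE_0+(I-E_0)(I-E)(I-E_0)$ commutes with $E_0$ (hence so does any function of it), and the identities $\sigma^{\#}\sigma=\sigma\sigma^{\#}=I$ and $\sigma E_0\sigma^{\#}=E$ follow as you state. The one point you flag --- that $\bigl(S^{\#}S\bigr)^{-1/2}$ is $J$-selfadjoint --- is indeed the only non-algebraic step, and it goes through exactly as you sketch, by the same contour-integral argument as Remark \ref{log cerca de 1} applied to the principal branch of $z^{-1/2}$ (which satisfies $\overline{f(\bar z)}=f(z)$ near $z=1$). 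What your approach buys is an explicit, self-contained formula for the section, analytic in $E$, with no appeal to the manifold structure of the set of idempotents; what the paper's approach buys is brevity by reusing \cite{corach por re93} and the already-established logarithm remark. Either argument also yields, as a by-product, that $J$-selfadjoint projections near $E_0$ lie in the orbit $\uj\cdot E_0$.
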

\begin{proof} In what follows,  a section will be given in a neighborhood of $E_0$; standard arguments can be applied to translate this section to other points.

 It will be useful to recall some facts on the geometry of  projections in $L(\h)$, see \cite{corach por re93}.
The set of projections in $L(\h)$, namely
\[  \mathbb{Q}=\{  \, Q \in L(\h) \, : \, Q^2=Q \, \},   \] 
is a smooth homogeneous space of the group $Gl(\h)$.  Its tangent space at $Q \in {\mathcal Q}$ can be identified with
\[  T_{Q} \mathbb{Q}=\{  \, X \in L(\h) \, :\,  XQ+QX=X \,  \},  \] 
which are co-diagonal operators with respect to $Q$, i.e. co-diagonal block-operator matrices according to the decomposition $\h=R(Q)\dotplus N(Q)$. For a fixed projection $Q_0\in \mathbb{Q}$,  the exponential map 
\[ \exp: T_{Q_0}\mathbb{Q} \to  \{ GQ_0G^{-1}: \ G \in Gl(\h)\}, \, \, \, \, \,  \exp(X)=e^XQ_0e^{-X},   \]
is a local diffeomorphism at $Q_0$. Therefore, there is a positive radius $r$ (depending on $Q_0$) such that the map $\{  \, Q \in \mathbb{Q}  \, : \, \| Q - Q_0\|<r \, \} \to    T_{Q_0}\mathbb{Q}$ given by $Q \mapsto X_Q$ is smooth and satisfies 
\[
e^{X_Q}Q_0e^{-X_Q}=Q.
\]  
Taking into account the facts stated above for the projection $E_0 \in \e$, given a suitable radius $r$, it is possible to define a continuous map
\begin{equation}\label{radio}  
s: \{ \, E \in \e  \, : \, \|E-E_0\|<r \, \} \to Gl(\h), \, \, \, \, \,  s(E)=e^{X_E}. 
\end{equation}
If this map takes values in $\uj$, it will clearly be the required continuous local cross section for $p_{E_0}$. The following argument to show that $s(E) \in \uj$
is borrowed and adapted from \cite[Proposition 4.4]{andruchow ch di 12}. It is useful to change from projections to symmetries via the map $E \mapsto R_E=2E-I$. Since $e^{X_E}E_0e^{-X_E}=E$, it follows that $e^{X_E}R_{E_0}e^{-X_E}=R_E$. Next, notice that an operator is co-diagonal with respect to $E_0$ if and only if it anticommutes with $R_{E_0}$. This implies that $R_{E_0}e^{-2X_E}=R_E=e^{2X_E}R_{E_0}$ and
\[ 
(e^{2X_E})^{\#}=(R_ER_{E_0})^{\#}=R_{E_0} R_E=e^{-2X_E}=(e^{2X_E})^{-1}.       
\]
Then, $e^{2X_E} \in \uj$. Shrinking the radius $r$ if it is necessary, one gets that $\| e^{2X_E} - I \|<1$. By Remark \ref{log cerca de 1}, it follows that $2X_E \in \mathfrak{u}_J$, and consequently, $X_E \in \mathfrak{u}_J$. Hence $e^{X_E} \in \uj$ and the proof is completed.
\end{proof}

\subsection{A local continuous cross section. The general case.}

Given a neutral dual pair $(\s, \t)$ in $\h$, in the next lemma a pair of biorthogonal bases for $\s$ and $\t$ are constructed. This result was known for finite-dimensional subspaces \cite[Lemma 1.31]{azizov iok89}, but it is original for the general case.

\begin{lem}\label{biort}
If $(\s, \t)$ is a a neutral dual pair in $\h$, then for any orthonormal basis $\{s_n\}_{n\geq 1}$ of $\s$ (in the Hilbert space sense) there exists a Riesz basis $\{t_n\}_{n\geq 1}$ of $\t$ such that 
\[
\K{s_i}{t_j}=\delta_{ij}, \ \ \ i,j\geq 1.
\]
\end{lem}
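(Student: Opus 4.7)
The plan is to build the Riesz basis $\{t_n\}$ as the image of $\{s_n\}$ under the inverse of a bounded bijection $\Psi : \t \to \s$ constructed from the neutral duality pairing. Concretely, define
\[
\Psi(t) := \sum_{n \geq 1} \K{s_n}{t}\, s_n, \qquad t\in\t.
\]
Using $\K{s_n}{t} = \PI{Js_n}{t} = \PI{s_n}{Jt}$ and the fact that $\{s_n\}$ is an orthonormal basis of $\s$, one recognizes $\Psi(t) = P_\s(Jt)$, where $P_\s$ is the Hilbert orthogonal projection onto $\s$. Hence $\Psi \in L(\t,\s)$ is bounded, with $\|\Psi\|\leq 1$.

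Next I would show that $\Psi$ is a linear bijection. For injectivity, $\Psi(t)=0$ means $Jt\in \s^\perp$, that is $t\in J(\s^\perp)=\s^{[\perp]}$; combined with $t\in\t$ and the dual pair decomposition $\h=\t\dotplus\s^{[\perp]}$, this forces $t=0$. For surjectivity, fix $s\in\s$ and decompose $Js=t_0+w$ with $t_0\in\t$ and $w\in\s^{[\perp]}$, again by the decomposition $\h=\t\dotplus\s^{[\perp]}$. Applying $J$ one obtains $s=Jt_0+Jw$, where $Jw\in J(\s^{[\perp]})=\s^\perp$. Therefore $\Psi(t_0)=P_\s(Jt_0)=P_\s(s)=s$. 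By the open mapping theorem $\Psi^{-1}:\s\to\t$ is a bounded isomorphism.

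Finally, set $t_n := \Psi^{-1}(s_n)$ for each $n\geq 1$. Since $\Psi$ is a bounded linear bijection carrying $\{t_n\}$ to the orthonormal basis $\{s_n\}$ of $\s$, the sequence $\{t_n\}$ is a Riesz basis of $\t$. Biorthogonality is then immediate: from $\Psi(t_j)=s_j$ and the expansion of $\Psi(t_j)$ in the orthonormal basis $\{s_n\}$ one reads off $\K{s_i}{t_j}=\delta_{ij}$ for all $i,j\geq 1$.

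The main obstacle is the surjectivity of $\Psi$, which is exactly where the non-triviality of the neutral dual pair structure is used; the decomposition $\h=\t\dotplus\s^{[\perp]}$ is essential to lift an arbitrary element of $\s$ back to $\t$. The injectivity argument uses the same decomposition in a simpler guise, and the Riesz basis property follows formally once $\Psi$ is known to be a Banach space isomorphism.
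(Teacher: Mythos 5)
Your argument is correct, and it reaches the same vectors as the paper by a slightly different route. The paper writes the biorthogonal system down explicitly: with $P$ the (bounded) projection onto $\s$ along $\t^{\ort}$ coming from the dual pair, it sets $t_n:=P^{\#}Js_n$, checks $\K{s_i}{t_j}=\delta_{ij}$ by a one-line computation, and shows $T=P^{\#}J|_{\s}:\s\to\t$ is a bounded bijection, so $\{t_n\}$ is a Riesz basis. You instead build the map in the opposite direction, $\Psi=P_{\s}J|_{\t}:\t\to\s$ (orthogonal projection rather than the dual-pair projection), prove bijectivity directly from the decomposition $\h=\t\dotplus\s^{\ort}$, invert via the open mapping theorem, and set $t_n:=\Psi^{-1}(s_n)$. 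Since $\t\cap\s^{\ort}=\{0\}$, the biorthogonal system inside $\t$ is unique, so your $t_n$ coincide with the paper's $P^{\#}Js_n$; the paper's version has the advantage of an explicit closed formula (which is reused later, in the continuity estimate of Lemma \ref{sec cont lema}, where $\|P_k^{\#}-P^{\#}\|$ controls $t_{m,Q_k}-t_{m,Q}$), while yours avoids introducing $P^{\#}$ at the cost of an appeal to the open mapping theorem and a non-explicit definition of $t_n$. One minor caveat: with the usual Krein-space convention that $\K{\cdot}{\cdot}$ (and $\PI{\cdot}{\cdot}$) is conjugate-linear in the second argument, your $\Psi(t)=\sum_n\K{s_n}{t}s_n$ is conjugate-linear and equals $C\bigl(P_{\s}Jt\bigr)$ with $C$ the conjugation attached to $\{s_n\}$, not $P_{\s}Jt$ itself; this is harmless (bijectivity and the readout $\K{s_i}{t_j}=\delta_{ij}$ are unaffected, and composing $\Psi^{-1}$ with $C$ restores a bounded linear bijection carrying $s_n$ to $t_n$), but it deserves a sentence.
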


\begin{proof}
Let $P\in L(\h)$ be the projection onto $\s$ along $\t^{\ort}$. Then, $P^{\#}$ is the projection onto $\t$ along $\s^{\ort}$. For a fixed orthonormal 
basis $\{s_n\}_{n\geq 1}$ of $\s$, define $t_n=P^{\#}Js_n \in \t$, $n \geq 1$. Hence, given $i,j\geq 1$,
\[
\K{s_i}{t_j}=\K{s_i}{P^{\#}Js_j}=\K{P s_i}{Js_j}=\K{s_i}{Js_j}=\PI{s_i}{s_j}=\delta_{ij}.
\]
To prove that $\{t_n\}_{n \geq 1}$ is a Riesz basis, observe that $T=P^{\#}J|_{\s}:\s \to \t$ is a (continuous) surjective operator since
\[ 
T(\s)=P^{\#}( J(\s) + N(P^{\#}))= R(P^{\#})=\t.
\]  
On the other hand, if $f\in N(T)$ then $Jf \in \s^{\ort}=J(\s^{\perp})$. It follows that 
$f \in \s^\bot\cap \s$. Thus, $T$ is injective. Hence, $\{t_n\}_{n \geq 1}$ is the image of an orthonormal basis by an invertible operator, i.e. it is a Riesz basis.
\end{proof}

If $Q$ is a $J$-normal projection, notice that the subspaces $R(Q)^\circ$ and $N(Q)^\circ$ form a neutral dual pair.
\begin{lem}\label{s link}
Let $Q, Q_0\in L(\h)$ be $J$-normal projections. Assume that the isotropic parts of their ranges have the same dimension. 
Then, there is a continuous $J$-isometric isomorphism 
\[
V: R(Q_0)^\circ  \dot{+} N(Q_0)^\circ \to R(Q)^\circ  \dot{+} N(Q)^\circ,
\]
satisfying $V(R(Q_0)^\circ)=R(Q)^\circ$ and $V(N(Q_0)^\circ)=N(Q)^\circ$.
\end{lem}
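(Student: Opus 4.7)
The strategy is to build $V$ from the biorthogonal bases produced by Lemma \ref{biort}. Since $Q$ and $Q_0$ are $J$-normal projections, as remarked before the statement, $(R(Q_0)^\circ, N(Q_0)^\circ)$ and $(R(Q)^\circ, N(Q)^\circ)$ are neutral dual pairs. Moreover, by Remark \ref{desc q y 1-q}, $R(Q_0)^\circ \dot{+} N(Q_0)^\circ = R(P_0 + P_0^\#)$ is a regular subspace, and similarly for $Q$. In particular, each of these algebraic sums is closed in $\h$, and since $R(Q_0)^\circ = R(P_0)$ and $N(Q_0)^\circ = R(P_0^\#)$ are the ranges of bounded projections, the decomposition is topological (and similarly for $Q$).

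The main step is the following. Using the hypothesis $\dim R(Q_0)^\circ = \dim R(Q)^\circ$, pick orthonormal bases $\{s^0_n\}_{n\geq 1}$ of $R(Q_0)^\circ$ and $\{s_n\}_{n\geq 1}$ of $R(Q)^\circ$ indexed by the same set. Applying Lemma \ref{biort} to each neutral dual pair yields Riesz bases $\{t^0_n\}_{n\geq 1}$ of $N(Q_0)^\circ$ and $\{t_n\}_{n\geq 1}$ of $N(Q)^\circ$ with the biorthogonality relations
\[
\K{s^0_i}{t^0_j}=\delta_{ij}=\K{s_i}{t_j}, \qquad i,j\geq 1.
\]
Then I define $V$ to be the unique linear extension of $s^0_n\mapsto s_n$, $t^0_n\mapsto t_n$. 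On $R(Q_0)^\circ$ this is a unitary (identifying two orthonormal bases), and on $N(Q_0)^\circ$ it sends a Riesz basis onto a Riesz basis, hence is a bounded isomorphism. Combined with the continuity of the projections associated with the topological direct sum $R(Q_0)^\circ \dot{+} N(Q_0)^\circ$, this produces a continuous linear isomorphism $V: R(Q_0)^\circ \dot{+} N(Q_0)^\circ \to R(Q)^\circ \dot{+} N(Q)^\circ$ satisfying $V(R(Q_0)^\circ)=R(Q)^\circ$ and $V(N(Q_0)^\circ)=N(Q)^\circ$ by construction.

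To verify that $V$ is $J$-isometric, observe that the $J$-inner product on $R(Q_0)^\circ \dot{+} N(Q_0)^\circ$ is entirely determined by the three relations $\K{s^0_i}{s^0_j}=0$ (neutrality of $R(Q_0)^\circ$), $\K{t^0_i}{t^0_j}=0$ (neutrality of $N(Q_0)^\circ$), and $\K{s^0_i}{t^0_j}=\delta_{ij}$, extended by sesquilinearity and continuity. Since $\{s_n\}$ and $\{t_n\}$ satisfy the identical set of relations, $V$ preserves the indefinite inner product on linear combinations and, by continuity, on the whole space.

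The main obstacle I anticipate is ensuring that $V$ is genuinely bounded on the direct sum; this requires invoking both the regularity of $R(P_0+P_0^\#)$ (so the projections onto $R(Q_0)^\circ$ and $N(Q_0)^\circ$ are continuous) and the fact that $\{t_n\}$ is a Riesz basis, not merely a Schauder basis of $N(Q)^\circ$. Once boundedness and bijectivity are established, continuity of $V^{-1}$ follows from the open mapping theorem, so $V$ is the required topological $J$-isometric isomorphism.
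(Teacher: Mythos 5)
Your proposal is correct and follows essentially the same route as the paper: apply Lemma \ref{biort} to the neutral dual pairs $(R(Q_0)^\circ,N(Q_0)^\circ)$ and $(R(Q)^\circ,N(Q)^\circ)$, define $V$ by matching the biorthogonal bases, get continuity from the Riesz-basis property (the paper views $\{s_n^0\}\cup\{t_n^0\}$ as a single Riesz basis of the regular subspace $R(P_0+P_0^\#)$, while you argue componentwise via the bounded projections, which amounts to the same thing), and verify the $J$-isometry from neutrality plus biorthogonality exactly as in the paper's computation.
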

\begin{proof}
According to Lemma \ref{biort}, for fixed orthonormal basis $\{s_n^0\}_{n\geq 1}$ and $\{s_n\}_{n\geq 1}$ of $R(Q_0)^\circ$ and $R(Q)^\circ$, there exist Riesz basis $\{t_n^0\}_{n\geq 1}$ and $\{t_n\}_{n\geq 1}$ of $N(Q_0)^\circ$ and $N(Q)^\circ$, respectively, such that $\K{s_i^0}{t_j^0}=\K{s_i}{t_j}=\delta_{ij}$.

Next, consider the operator $V:R(Q_0)^\circ  \dot{+} N(Q_0)^\circ \to R(Q)^\circ \dot{+} N(Q)^\circ$ given by
\[
V\bigg(\sum_{n} \alpha_n s_n^0 + \sum_m\beta_m t_m^0 \bigg)=\sum_{n} \alpha_n s_n + \sum_m\beta_m t_m .
\]
Since $V$ maps the (Riesz) basis $\{s_n^0\}_{n\geq 1}\cup \{t_n^0\}_{n\geq 1}$ onto the (Riesz) basis $\{s_n\}_{n\geq 1}\cup \{t_n\}_{n\geq 1}$, it follows that it is a  continuous operator. 

Moreover, $V$ is a $J$-isometry by construction: due to the $J$-biorthogonal\-i\-ty of the bases, it follows that 
\begin{flalign*}
&\K{V\bigg(\sum_{n} \alpha_n s_n^0 + \sum_m\beta_m t_m^0 \bigg)}{V\bigg(\sum_{n} \alpha_n s_n^0 + \sum_m\beta_m t_m^0 \bigg)}  &\\
&= \K{\sum_{n} \alpha_n s_n + \sum_m\beta_m t_m }{\sum_{n} \alpha_n s_n + \sum_m\beta_m t_m }  = 2 \sum_{n,m} \text{Re} (\alpha_n \bar{\beta}_m[s_n,t_m]) & \\
& = 2 \sum_{n} \text{Re} (\alpha_n \bar{\beta}_n) = \K{\sum_{n} \alpha_n s_n^0 + \sum_m\beta_m t_m^0}{\sum_{n} \alpha_n s_n^0 + \sum_m\beta_m t_m^0},&
\end{flalign*} 
where in the second equality, it is taken into account that $\sum_{n} \alpha_n s_n\in R(Q)^\circ$ and $\sum_m\beta_m t_m\in N(Q)^\circ$, and in the last equality, it is used that $\sum_{n} \alpha_n s_n^0\in R(Q_0)^\circ$ and $\sum_m\beta_m t_m^0\in N(Q_0)^\circ$. Hence, $V$ is a $J$-isometric isomorphism.
\end{proof}

 The next step is to show that the above $J$-isometric isomorphism $V$ depends continuously on $Q$. 
Some basic facts on the geometry of the unitary group and the space of selfadjoint projections will be needed. Let $\mathcal{U}$ be the unitary group of $L(\h)$, and $\mathcal{P}$ be the manifold of selfadjoint projections, i.e.
\[  \mathcal{P}=\{ \, P \in L(\h)\, : \,P=P^2 , \, P=P^* \, \}. \]
The natural action of $\mathcal{U}$ on $\mathcal{P}$ given by $U\cdot P=UPU^*$ has local continuous cross sections. Although this fact was pointed out in \cite[Remark 3.2]{corach por re90},  in the following lemma a short proof is included for the sake of completeness. 
The main idea is adapted from a similar context in \cite[Proposition 2.2]{andr lar}.

\begin{lem}\label{section on p}
If $P_0 \in \mathcal{P}$ the map $\mathcal{U} \to \mathcal{P}$, given by $U \mapsto UP_0U^*$, has local continuous cross sections.  
\end{lem}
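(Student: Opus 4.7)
The plan is to write down an explicit norm-continuous invertible intertwiner $Z(P)$ between $P_0$ and a nearby $P\in\mathcal{P}$, and then to take its unitary part via polar decomposition; a translation argument then handles any other point in the orbit.

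The candidate intertwiner, modelled on the classical Kato/Sz.-Nagy construction, is
$$Z(P) = P P_0 + (I-P)(I-P_0).$$
This is norm-continuous in $P$ with $Z(P_0) = P_0 + (I-P_0) = I$, so there is an open neighborhood $V$ of $P_0$ in $\mathcal{P}$ on which $Z(P)\in Gl(\h)$. A one-line computation using $P^2=P$ and $P_0^2=P_0$ yields
$$P\, Z(P) = P P_0 = Z(P)\, P_0,$$
so $Z(P)$ intertwines $P_0$ and $P$ throughout $V$.

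Next, I would consider the polar decomposition $Z(P) = U(P)\,|Z(P)|$. Since $Z(P)$ is invertible on $V$, the factor $U(P)$ is unitary and depends norm-continuously on $P$. Expanding and using $P(I-P)=(I-P)P=0$ produces
$$Z(P)^* Z(P) = P_0 P P_0 + (I-P_0)(I-P)(I-P_0),$$
which is block-diagonal with respect to $\h = R(P_0)\oplus N(P_0)$ and therefore commutes with $P_0$. Hence $|Z(P)|^{-1}$ commutes with $P_0$ as well, and combining this with the intertwining identity $P Z(P) = Z(P) P_0$ yields $P\,U(P) = U(P)\, P_0$, i.e.\ $U(P)\,P_0\,U(P)^* = P$.

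Therefore $\sigma(P):=U(P)$ is a norm-continuous local cross section at $P_0$. For any other point $P_1 = V_0 P_0 V_0^{*}$ in the orbit, $P\mapsto V_0\,\sigma(V_0^{*}P V_0)$ is the required local section around $P_1$. The only non-trivial step is the algebraic verification of the formula for $Z(P)^*Z(P)$ and its commutation with $P_0$; once this is settled, the fact that the unitary part of the polar decomposition preserves the intertwining relation is automatic, and the rest is a standard translation argument.
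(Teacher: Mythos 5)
Your proposal is correct and follows essentially the same route as the paper: the same intertwiner $S=PP_0+(I-P)(I-P_0)$, invertible near $P_0$, whose unitary polar factor conjugates $P_0$ to $P$ and depends continuously on $P$. The only cosmetic difference is that you use the right polar part $Z|Z|^{-1}$ and the commutation of $Z^*Z$ with $P_0$, while the paper uses $|S^*|^{-1}S$ and the commutation of $SS^*$ with $P$; both computations are equivalent.
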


\begin{proof}
Consider the open set 
\[  \mathcal{V}=\{ \, P \in \mathcal{P} \, : \, \|P - P_0\|<1  \, \}.  \]
For $P \in \mathcal{V}$, set  $S=PP_0 +  (I-P)(I-P_0)$. Then it is well-known that $\| S - I \| < 1$. Thus, $S$ is invertible. 
The unitary part of $S$ given by $U=|S^*|^{-1}S$ is a continuous function of $P$. Notice that $SS^*P=PSS^*$, which implies that 
$|S^*|P=P|S^*|$ and $P|S^*|^{-1}=|S^*|^{-1}P$.  Therefore, $PU=P|S^*|^{-1}S=|S^*|^{-1}PS=|S^*|^{-1}SP_0=UP_0$, i.e. $UP_0U^*=P$. Hence 
$U=U(P)$ is a continuous local cross section. \qedhere
\end{proof}

It will be also useful to state here a well-known result on projections.

\begin{lem}(\cite[Ch. I]{kato76})\label{kato proj}
Let $E_1, E_2 \in L(\h)$ be projections. If $P_{R(E_1)}$ and $P_{R(E_2)}$ are the orthogonal projections onto their ranges, respectively, then 
\[  \| P_{R(E_1)} -  P_{R(E_2)} \| \leq \|  E_1- E_2   \|.  \]
\end{lem}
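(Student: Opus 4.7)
The plan is to split the problem into bounding two one-sided quantities $\|(I - P_2) P_1\|$ and $\|P_2 (I - P_1)\|$ separately (where I write $P_i := P_{R(E_i)}$), and then combine them via an elementary norm identity for orthogonal projections. The role of the oblique projections $E_i$ will enter only through the fact that $R(E_i) = R(P_i)$, which supplies the relations $P_i E_i = E_i$ and $E_i P_i = P_i$.

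First I would verify the operator identity
\[ P_1 - P_2 = (I - P_2) P_1 - P_2 (I - P_1), \]
and observe that the two summands on the right have mutually orthogonal ranges: the first lies in $R(P_2)^\perp$ and the second in $R(P_2)$. Decomposing a unit vector $x$ as $x = P_1 x + (I - P_1) x$, so that $\|x\|^2 = \|P_1 x\|^2 + \|(I - P_1) x\|^2$, this observation yields
\[ \|P_1 - P_2\| = \max\bigl(\|(I - P_2) P_1\|,\ \|P_2 (I - P_1)\|\bigr). \]

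Next, to bound $\|(I - P_2) P_1\|$, I would exploit that $P_i$ is the orthogonal projection onto the same closed subspace as the oblique projection $E_i$. For any $x \in R(P_1) = R(E_1)$ one has $E_1 x = x$ and $E_2 x \in R(E_2) = R(P_2)$, whence $(I - P_2) E_2 x = 0$. Consequently
\[ (I - P_2) x = (I - P_2)(E_1 x - E_2 x) = (I - P_2)(E_1 - E_2) x, \]
and since $\|I - P_2\| \leq 1$, this gives $\|(I - P_2) P_1\| \leq \|E_1 - E_2\|$. Interchanging the subscripts produces $\|(I - P_1) P_2\| \leq \|E_1 - E_2\|$; taking adjoints (and using selfadjointness of the $P_i$) then yields $\|P_2 (I - P_1)\| \leq \|E_1 - E_2\|$. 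Combined with the identity of the first step, the lemma follows.

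I do not expect a genuine obstacle: both steps are short, elementary manipulations. The only mildly delicate point is the norm identity in step one, which truly relies on the orthogonality of the two summands' ranges, and hence on $P_1, P_2$ being \emph{orthogonal} projections. The analogous identity would already fail with general oblique projections in their place, so the argument genuinely depends on passing from $E_i$ to the orthogonal projection $P_{R(E_i)}$ before comparing them.
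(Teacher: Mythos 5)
Your proof is correct. There is nothing in the paper to compare it with: the lemma is stated with a citation to Kato's book (Ch.~I, pairs of projections) and no proof is given, so your argument serves as a self-contained replacement for that reference, and it is essentially the standard one. All steps check: the identity $P_1-P_2=(I-P_2)P_1-P_2(I-P_1)$ is a purely algebraic cancellation valid for any operators; the two summands indeed take values in $R(P_2)^{\perp}$ and $R(P_2)$ (here orthogonality, i.e.\ selfadjointness of $P_2$, is genuinely used, as you note); and writing $(I-P_2)P_1x=(I-P_2)P_1(P_1x)$, the Pythagorean splitting $\|x\|^2=\|P_1x\|^2+\|(I-P_1)x\|^2$ gives $\|P_1-P_2\|\le\max\bigl(\|(I-P_2)P_1\|,\|P_2(I-P_1)\|\bigr)$. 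One cosmetic remark: you assert this as an equality, but your decomposition argument only yields the inequality ``$\le$'', which is all you need; the reverse inequality is also true (e.g.\ from $(I-P_2)P_1=(I-P_2)(P_1-P_2)$ and $P_2(I-P_1)=(P_2-P_1)(I-P_1)$), so nothing is at stake. The key one-sided bound is also fine: for $x\in R(P_1)=R(E_1)$ one has $E_1x=x$ and $(I-P_2)E_2x=0$, hence $(I-P_2)x=(I-P_2)(E_1-E_2)x$ and $\|(I-P_2)P_1\|\le\|E_1-E_2\|$; interchanging indices and taking adjoints handles $\|P_2(I-P_1)\|$. (The implicit use of closedness of $R(E_i)$ is harmless, since ranges of bounded idempotents are closed, and only $P_iE_i=E_i$-type relations through $R(E_i)=R(P_i)$ are used.)
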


Now, given a fixed $J$-normal projection $Q_0\in L(\h)$, consider the following neighborhood of $Q_0$:
\[   
\mathcal{V}_{Q_0}=\bigg\{  \, Q \in \mathcal{Q}  \, : \,  \| Q - Q_0 \| < \frac{1}{2(1 + \|Q_0\|)}  \,  \bigg\}. 
\]
Then, define a map $V: \mathcal{V}_{Q_0} \ra L(\h)$ such that $V(Q)$ is a $J$-isometric isomorphism between $R(Q_0)^\circ + N(Q_0)^\circ$ and $R(Q_0)^\circ + N(Q_0)^\circ$ as follows:

Given $Q\in \mathcal{V}_{Q_0}$, it is easy to see that  $\|Q\|<\|Q_0\|+1$.  Recall that $P=Q(I-Q^\#)$ and $P_0=Q_0(I-Q_0^\#)$, then
\begin{equation}\label{q vs p}
 \| P - P_0\| \leq \| Q- Q_0 \| + \|QQ^\# - Q_0Q_0^\# \| \leq 2 (1 + \|Q_0\|)\|Q - Q_0\|<1.     
\end{equation} 
According to Lemma \ref{kato proj}, it follows that
\begin{equation}\label{p vs p ort}
 \| P_{R(Q)^\circ} - P_{R(Q_0)^\circ} \| \leq \| P- P_0\| < 1. 
\end{equation}  
By Lemma \ref{section on p}, there exists a unitary operator $U=U(P_{R(Q)^\circ})$, which depends
continuously on $P_{R(Q)^\circ}$, and satisfies $UP_{R(Q_0)^\circ}U^*=P_{R(Q)^\circ}$. 
In particular, this implies that $\dim R(Q)^\circ=\dim R(Q_0)^\circ$. 

Moreover, for a fixed orthonormal basis $\{s_n^0\}_{n\geq 1}$ of $R(Q_0)^\circ$, this $U\in\mathcal{U}$ gives a procedure to choose an orthonormal basis of $R(Q)^\circ$: set $s_{n,Q}=Us_n ^0$ for every $n\geq 1$. 

According to Lemma \ref{biort}, there are Riesz bases $\{t_{n,Q}\}_{n \geq 1}$ and $\{t_n ^0\}_{n \geq 1}$ of $N(Q)^\circ$ and $N(Q_0)^\circ$, respectively, such that $\K{s_n^0}{t_m^0}=\K{s_{n,Q}}{t_{m,Q}}=\delta_{nm}$. Applying Lemma \ref{s link}, one can construct 
a $J$-isometric isomorphism $ V(Q)$ between $R(Q_0)^\circ  \dot{+} N(Q_0)^\circ$ and $R(Q)^\circ  \dot{+} N(Q)^\circ$. In fact, it will be useful to 
extend this linear operator to $\h$, i.e.
$$ V(Q) f :=\left\{
\begin{array}{cl}
\displaystyle{\sum_{n} \alpha_n s_{n,Q} + \sum_m\beta_m t_{m,Q} }  & \text{ if }  f=\displaystyle{\sum_{n} \alpha_n s_n^0 + \sum_m\beta_m t_m^0};\\ 
\\
0  & \text{ if }  f \in (R(Q_0)^\circ  \dot{+} N(Q_0)^\circ)^{\ort}.
\end{array}\right.
$$

\begin{lem}\label{sec cont lema}
The map  $V:\mathcal{V}_{Q_0} \to L(\h)$ defined above is continuous.
\end{lem}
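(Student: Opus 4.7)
My plan is to establish norm continuity of $V$ at $Q_0$ (continuity at other points of $\mathcal{V}_{Q_0}$ follows by the same argument) by expressing the restriction of $V(Q)$ to each relevant subspace as a composition of continuously varying global operators on $\h$. The first reduction is that, by definition, both $V(Q)$ and $V(Q_0)$ vanish on $R_0^{\ort}$, where $R_0 := R(Q_0)^\circ \dot{+} N(Q_0)^\circ$. By Remark \ref{desc q y 1-q}, $R_0 = R(P_0+P_0^\#)$ is regular, hence closed, so $\h = R_0 \oplus R_0^{\ort}$ is a Hilbert-orthogonal decomposition and $\|V(Q)-V(Q_0)\| = \|(V(Q)-V(Q_0))|_{R_0}\|$. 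Moreover, $R_0$ is the topological direct sum of the closed subspaces $R(Q_0)^\circ$ and $N(Q_0)^\circ$, so the two coordinate projections onto these summands are bounded, and it suffices to bound $\|V(Q)-V(Q_0)\|$ separately on each one.

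On the summand $R(Q_0)^\circ$, I would exploit that $\{s_n^0\}_{n\geq 1}$ is an \emph{orthonormal} basis: for $f \in R(Q_0)^\circ$ one has $f = \sum_n \langle f, s_n^0 \rangle s_n^0$, and the definition $s_{n,Q}=U(Q)s_n^0$ yields immediately
\[
V(Q)|_{R(Q_0)^\circ} = U(Q)|_{R(Q_0)^\circ}.
\]
The unitary $U(Q)$ depends continuously on $P_{R(Q)^\circ}$ by Lemma \ref{section on p}, $P_{R(Q)^\circ}$ on $P = Q(I-Q^\#)$ by Lemma \ref{kato proj}, and $P$ is continuous in $Q$ (cf.\ \eqref{q vs p}); together with $U(Q_0)=I$, this gives norm continuity on the first summand.

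On $N(Q_0)^\circ$ the argument is more delicate because $\{t_n^0\}_{n\geq 1}$ is only a Riesz basis, so the coefficients $\beta_n$ in $f=\sum_n \beta_n t_n^0$ are not recoverable as Hilbert inner products against $t_n^0$. To deal with this, I would introduce the isomorphism $T_0 : R(Q_0)^\circ \to N(Q_0)^\circ$ from the proof of Lemma \ref{biort}, defined by $T_0 s = P_0^\# J s$, which sends $s_n^0 \mapsto t_n^0$ and is a topological isomorphism by that proof. Then $T_0^{-1}f = \sum_n \beta_n s_n^0$, and the identity $t_{n,Q}=P^\# J s_{n,Q}=P^\# J U(Q) s_n^0$ gives
\[
V(Q) f = \sum_n \beta_n\, P^\# J U(Q) s_n^0 = \bigl(P^\# J U(Q)\bigr) T_0^{-1} f,
\]
so $V(Q)|_{N(Q_0)^\circ} = P^\# J U(Q) T_0^{-1}$. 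Since $T_0$ is fixed and $P, U(Q)$ depend continuously on $Q$, this restriction is continuous at $Q_0$. Combining both estimates yields $\|V(Q)-V(Q_0)\| \to 0$ as $Q \to Q_0$.

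The main obstacle I foresee is precisely the reorganization in the third paragraph: the basis-wise definition of $V(Q)$ on $N(Q_0)^\circ$ has to be rewritten in terms of globally defined continuous operators ($P^\#$, $U(Q)$, and the fixed isomorphism $T_0$), because the non-orthonormality of $\{t_n^0\}$ forbids a direct per-basis-vector estimate through Hilbert inner products. Once this global identification is in place, continuity of $V$ reduces to the already established continuity of $Q \mapsto P$ and $Q \mapsto U(Q)$.
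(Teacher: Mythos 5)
Your argument is correct and is essentially the paper's own proof recast at the operator level: both reduce the continuity of $V$ to the continuity of $Q\mapsto P=Q(I-Q^\#)$ (estimate \eqref{q vs p}), of $Q\mapsto P_{R(Q)^\circ}$ (Lemma \ref{kato proj}) and of $Q\mapsto U(Q)$ (Lemma \ref{section on p}), and both absorb the non-orthonormality of $\{t_n^0\}_{n\geq 1}$ through the boundedness of the fixed isomorphism $s_n^0\mapsto t_n^0$ (your $T_0$, playing the role of the paper's Riesz-basis constants $c_1,c_2$), your identity $t_{n,Q}=P^\# J U(Q)s_n^0$ being exactly the construction of Lemma \ref{biort} applied to the basis $\{s_{n,Q}\}_{n\geq 1}$. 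The only slip is calling $\h=R_0\oplus R_0^{\ort}$, with $R_0=R(Q_0)^\circ\dot{+}N(Q_0)^\circ$, a Hilbert-orthogonal decomposition: here $\ort$ denotes $J$-orthogonality, so $R_0^{\ort}=J(R_0^{\perp})\neq R_0^{\perp}$ in general; but since the projection onto $R_0$ along $R_0^{\ort}$ is the fixed bounded operator $P_0+P_0^\#$, your norm equality merely becomes an inequality with the constant $\|P_0+P_0^\#\|$ and the rest of the argument is unaffected.
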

\begin{proof}
Let $\{ Q_k\}_{k\geq 1}$ be a sequence in $\mathcal{V}_{Q_0}$. Assume that $\|Q_k - Q\|\to 0$ for some $Q \in \mathcal{V}_{Q_0}$. Let $U_k=U(P_{R(Q_k)^\circ})$ be the unitary associated with $P_k=Q_k(I-Q_k^\#)$ defined after \eqref{p vs p ort}.  Analogously, let $U$  and $P$ be the the corresponding unitary and projection associated with $Q$.

Pick a vector $f=\sum_{n} \alpha_n s_n^0 + \sum_m\beta_m t_m^0 \in R(Q_0)^\circ \dot{+}  N(Q_0)^\circ$, where $\{s_n^0\}_{n\geq 1}$ is an orthonormal basis of $R(Q_0)^\circ$ and $\{t_n^0\}_{n\geq 1}$ is the Riesz basis of $N(Q_0)^\circ$ given by Lemma \ref{biort}. 

In order to prove the continuity of the map $V$, note that 
\begin{align*}
\|(V(Q_k) - V(Q) )f \| & = \|\sum_{n} \alpha_n (s_{n, Q_k} - s_{n, Q}) + \sum_m\beta_m (t_{m, Q_k} - t_{m, Q}) \| \\
& = \| \sum_{n} \alpha_n (U_k  - U )s_n^0 + \sum_m\beta_m (P^{\#}_k - P^{\#})Js_m^0 \| \\
& \leq \|U_k - U \|  \, \| \sum_{n} \alpha_n s_n^0 \| + \| P^{\#}_k - P^{\#} \| \,  \| \sum_m \beta_m s_m^0 \| \\
& \leq \|U_k - U \| \, \|f\|  + C \, \| P_k - P \| \, \|f\|, 
\end{align*}
for a suitable $C=c_1 c_2>0$, because 
$$\| \sum_n \beta_n s_n^0 \|=\left(\sum_n |\beta_n|^2\right)^{1/2}\leq c_1 \| \sum_n \beta_n t_n^0 \|\leq c_1 c_2 \|f\|,$$
where $c_1$ is a constant related to the Riesz basicity of $\{t_n^0\}_{n\geq 1}$ and $c_2$ is the norm of the projection onto $N(Q_0)^\circ$ along $R(Q_0)^\circ$.

Therefore,
\[  
\| V(Q_k) - V(Q) \| \leq \|U_k - U \|   + C \, \| P_k - P\|.
\]
From  (\ref{q vs p}) one gets that $\| P_k - P\| \to 0$, so it remains  to show that $ \|U_k - U \|\to 0 $. Lemma \ref{kato proj} implies that  $\|  P_{R(Q_k)^\circ} - P_{R(Q)^\circ}\|\to 0$ and the map given by $P_{R(Q)} \mapsto U(P_{R(Q)})$ is continuous, so that $\| U_k - U\|\to 0$. 
 \end{proof}

Now the main result of this section follows. In particular, when $J=I$, one recovers the connected components of the Grassmann manifold of a Hilbert space, and this topological result can be deduced in a different fashion from the submanifold structure proved in \cite{pr87}.

\begin{teo}\label{section}
Let $Q_0 \in \mathcal{Q}$, then the map  $p_{Q_0}: \uj \to \uj \cdot Q_0$ given by
\[ 
p_{Q_0}(U)=UQ_0U^{\#}, 
\]
has  local continuous cross sections. In particular, the quotient topology and the topology inherited from $L(\h)$ are equivalent in $\uj \cdot Q_0$.
\end{teo}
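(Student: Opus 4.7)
The plan is to glue together the ingredients developed in this subsection---Proposition \ref{section j seladj} for the $J$-selfadjoint case and Lemmas \ref{s link}--\ref{sec cont lema} for the isotropic parts---blockwise, according to the $J$-orthogonal decomposition $\h = R(E_0)\sdo R(P_0+P_0^\#)\sdo R(F_0)$ of Remark \ref{desc q y 1-q}. Shrinking $\mathcal{V}_{Q_0}$ if necessary so that all three constructions are available simultaneously, the continuous local cross section $\sigma:\mathcal{V}_{Q_0}\cap(\uj\cdot Q_0)\to\uj$ with $\sigma(Q_0)=I$ will be assembled as a sum of three $J$-isometries, each one mapping a summand of the $Q_0$-decomposition onto the corresponding summand of the $Q$-decomposition $\h = R(E)\sdo R(P+P^\#)\sdo R(F)$.

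Concretely, Proposition \ref{section j seladj} applied at $E_0$ yields a continuous $W_1(Q)\in\uj$, defined for $E=QQ^\#$ near $E_0$, with $W_1(Q_0)=I$ and $W_1(Q)\,E_0\,W_1(Q)^\# = E$. Applied at $F_0$, the same result produces a continuous $W_2(Q)\in\uj$ with $W_2(Q)\,F_0\,W_2(Q)^\# = F=(I-Q)(I-Q)^\#$ and $W_2(Q_0)=I$. Lemma \ref{sec cont lema} supplies the continuous family $V:\mathcal{V}_{Q_0}\to L(\h)$, where $V(Q)$ is a $J$-isometric isomorphism from $R(Q_0)^\circ\dot{+}N(Q_0)^\circ$ onto $R(Q)^\circ\dot{+}N(Q)^\circ$ taking $R(Q_0)^\circ$ to $R(Q)^\circ$, extended by zero on $R(P_0+P_0^\#)^{\ort}=R(E_0)\dot{+}R(F_0)$; in particular, a direct check gives $V(Q_0)=P_0+P_0^\#$. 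Define
\[
\sigma(Q) = W_1(Q)\,E_0 + V(Q) + W_2(Q)\,F_0.
\]

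The verification is then immediate. Decomposing any $f\in\h$ as $f = E_0 f + (P_0+P_0^\#)f + F_0 f$, the three summands of $\sigma(Q)f$ lie respectively in the mutually $J$-orthogonal subspaces $R(E)$, $R(P+P^\#)$, $R(F)$, whose $J$-orthogonal sum exhausts $\h$ (Remark \ref{desc q y 1-q}). Each block is a $J$-isometric bijection onto its target, so $\sigma(Q)$ is a $J$-isometric bijection of $\h$, i.e., $\sigma(Q)\in\uj$. Moreover $\sigma(Q)$ carries $R(Q_0)=R(E_0)\sdo R(Q_0)^\circ$ onto $R(Q)=R(E)\sdo R(Q)^\circ$ and $N(Q_0)$ onto $N(Q)$, hence $\sigma(Q)Q_0\sigma(Q)^\# = Q$. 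Continuity of $\sigma$ is inherited from that of $W_1$, $W_2$ and $V$, and $\sigma(Q_0)=I$. The statement about topologies is then the standard consequence: the existence of a continuous local section makes $p_{Q_0}$ a locally trivial principal $\mathcal{G}$-bundle near $Q_0$, so the quotient topology on $\uj\cdot Q_0\simeq\uj/\mathcal{G}$ coincides with the topology inherited from $L(\h)$.

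The main subtlety, essentially already handled by the preceding lemmas, is the joint availability of the three pieces for $Q$ near $Q_0$. Continuity of $Q\mapsto QQ^\#$ and $Q\mapsto(I-Q)(I-Q)^\#$ keeps $E$ and $F$ inside the cross-section neighborhoods around $E_0$ and $F_0$, while the dimension match required by Lemma \ref{s link} follows from $\|P_{R(Q)^\circ}-P_{R(Q_0)^\circ}\|<1$ (inequality~\eqref{p vs p ort}) and the rank stability of orthogonal projections under small norm perturbations.
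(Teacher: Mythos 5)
Your proposal is correct and follows essentially the same route as the paper: it glues the two $J$-selfadjoint sections from Proposition \ref{section j seladj} (at $E_0$ and $F_0$) with the continuous family $V(Q)$ of Lemma \ref{sec cont lema} along the decomposition $\h=R(E_0)\sdo R(P_0+P_0^\#)\sdo R(F_0)$, and your formula $\sigma(Q)=W_1(Q)E_0+V(Q)+W_2(Q)F_0$ agrees with the paper's section $s(Q)=s_1(E)E_0+s_2(F)F_0+V(Q)(P_0+P_0^\#)$ because $V(Q)$ vanishes on $(R(Q_0)^\circ\dot{+}N(Q_0)^\circ)^{[\perp]}$. The concluding topological argument (a local continuous section makes $p_{Q_0}$ open, hence the quotient and relative topologies coincide) is likewise the paper's.
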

\begin{proof}
Recall that $E_0=Q_0Q_0^\#$ and $F_0=(I-Q_0)(I-Q_0)^\#$.   By Proposition   \ref{section j seladj}, there is a continuous section $s_1$ of the map $p_{E_0}(U)=UE_0U^\#$. By the same method one can construct  another continuous section $s_2$ of the 
map $p_{F_0}(U)=UF_0U^\#$.  
In fact, these sections are defined in  open balls of radii $r_{E_0}$ and $r_{F_0}$, respectively, contained in $\e$. Set 
$$r_{Q_0}= \min \bigg\{ \frac{r_{E_0}}{1 + 2\|Q_0\|}, \frac{r_{F_0}}{1 + 2\|Q_0\|}, \frac{1}{1 + 2\|Q_0\|}   \bigg\}  .$$
Recall that $V(Q)$ is a continuous function of $Q$ by Lemma \ref{sec cont lema}. Moreover, it satisfies $V(Q)P_0V(Q)^\# =P$. Then the map $s:\{  Q \in \mathcal{Q} : \  \| Q - Q_0 \| < r_{Q_0}  \} \to \uj$ defined by
\[ 
s(Q)=Es_1(E)E_0 + Fs_2(F)F_0 +(P+ P^\#)V(Q)(P_0+ P_0^\#),
\]   
is the required  continuous section for $p_{Q_0}$. To show that $s(Q)\in\uj$ for $Q \in \mathcal{Q}$ with $\| Q - Q_0 \| < r_{Q_0}$, observe that $s(Q)$ can be alternatively written as $s(Q)=s_1(E)E_0 + s_2(F)F_0 + V(Q)(P_0+ P_0^\#)$ or $s(Q)=Es_1(E) + Fs_2(F) +(P+ P^\#)V(Q)$.

Recall that the identity map $Id: \uj \cdot Q_0 \simeq \uj /\mathcal{G} \to  \uj \cdot Q_0 \subseteq L(\h) $ is continuous by definition of the quotient topology. On the other hand, the existence of local continuous cross sections implies that $p_{Q_0}$ is an open map, and consequently, $Id$ is a homeomorphism. This proves the equivalence between both topologies.   
\end{proof}

\subsection{Connected components of $\q$}

It is necessary to recall some terminology used in \cite{G}.  Let $\s$ be a pseudo-regular subspace of $\h$. So, there exists a regular subspace $\m$ such that $\s=\m [\dot{+}] \s^{\circ}$.  Consider a decomposition 
\[
\s= \s^\circ \sdo \, \m_+ \, [\dot{+}] \, \m_-,
\]
where $\m_+$ is a uniformly $J$-positive subspace, $\m_-$ is a uniformly $J$-negative subspace and $\m=\m_+ [\dot{+}] \m_-$. Then, the numbers $\ka{+}(\s)=\dim \m_+$, $\ka{-}(\s)=\dim \m_-$ and $\ka{0}(\s)=\dim \s^\circ$ are called the \emph{positive, negative and isotropic signatures} of $\s$, respectively. It has been shown that these numbers do not depend on the particular decomposition considered (see e.g. \cite[Ch. 1, Thm. 6.7]{azizov iok89}). 
	If $\s$ is pseudo-regular then so is $\s^{\ort}$, and the \emph{positive and negative cosignatures of $\s$} are defined as $c\ka{+}(\s):=\ka{+}(\s^{\ort})$ and $c\ka{-}(\s):=\ka{-}(\s^{\ort})$. 

\begin{prop}(\cite[Proposition 4.6]{G})\label{spatial c}
Let $\s$ and $\t$ be two pseudo-regular subspaces of $\h$. The following statements are equivalent:
\begin{enumerate}
	\item[i)] $\s$ and $\t$ are $J$-unitarily equivalent, i.e. there exists $U\in\uj$ such that $U(\s)=\t$;
	\item[ii)]  $\s$ is $J$-isometrically isomorphic to $\t$ and $\s^{\ort}$ is $J$-isometrically isomorphic to $\t^{\ort}$;
	\item[iii)] $\ka{+}(\s)=\ka{+}(\t)$, $\ka{-}(\s)=\ka{-}(\t)$,  $c\ka{+}(\s)=c\ka{+}(\t)$,  $c\ka{-}(\s)=c\ka{-}(\t)$ 	and 	$\ka{0}(\s)=\ka{0}(\t)$.
\end{enumerate}
 \end{prop}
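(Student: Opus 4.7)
My plan is to close the triangle \emph{(i) $\Rightarrow$ (ii) $\Rightarrow$ (iii) $\Rightarrow$ (i)}, with the real content concentrated in the last implication. The first arrow is immediate: any $U\in\uj$ with $U(\s)=\t$ restricts to a $J$-isometric isomorphism $\s\to\t$, and because $U$ preserves the indefinite form globally, it simultaneously restricts to a $J$-isometric isomorphism $\s^{\ort}\to\t^{\ort}$. For (ii) $\Rightarrow$ (iii), any $J$-isometric isomorphism $V:\s\to\t$ maps isotropic vectors to isotropic vectors, so $V(\s^\circ)=\t^\circ$ and $\ka{0}(\s)=\ka{0}(\t)$; moreover $V$ descends to a $J$-isometric isomorphism between the quotients $\s/\s^\circ$ and $\t/\t^\circ$, which are naturally isomorphic to the regular parts $\m_\s$ and $\m_\t$ arising in any decompositions $\s=\m_\s\sdo\s^\circ$ and $\t=\m_\t\sdo\t^\circ$. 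Since $\ka{\pm}$ is an invariant of the $J$-isometric isomorphism class of a regular Krein space, $\ka{\pm}(\s)=\ka{\pm}(\t)$. The parallel argument applied to $\s^{\ort}\to\t^{\ort}$ gives the equality of cosignatures.

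The heart of the proof is (iii) $\Rightarrow$ (i), where I would assemble $U\in\uj$ as a $J$-orthogonal sum of three partial $J$-isometric isomorphisms. I would first write $\s=\m_\s\sdo\s^\circ$ with $\m_\s$ regular, so $\h=\m_\s\sdo\m_\s^{\ort}$ and the neutral subspace $\s^\circ$ sits inside the regular Krein space $\m_\s^{\ort}$. Using the machinery already available in the paper, I would choose a neutral dual companion $\mathcal{N}_\s$ of $\s^\circ$ inside $\m_\s^{\ort}$ (concretely, if $Q_0\in\q$ satisfies $R(Q_0)=\s$ and $QQ^\#=E$, $P=Q_0(I-Q_0^\#)$, then $\mathcal{N}_\s:=R(P^\#)\subset N(E)=\m_\s^{\ort}$ works, in view of Proposition \ref{desc} and Remark \ref{desc q y 1-q}); then $\s^\circ\dotplus\mathcal{N}_\s$ is a regular subspace of signature $(\ka{0}(\s),\ka{0}(\s))$. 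Letting $\mathcal{R}_\s$ be the $J$-orthogonal complement of $\s^\circ\dotplus\mathcal{N}_\s$ inside $\m_\s^{\ort}$, I obtain the $J$-orthogonal decomposition
\[
\h=\m_\s\sdo(\s^\circ\dotplus\mathcal{N}_\s)\sdo\mathcal{R}_\s,
\]
in which $\s^{\ort}=\s^\circ\sdo\mathcal{R}_\s$, so that $\mathcal{R}_\s$ carries the ``regular part'' of $\s^{\ort}$. I would carry out the same construction for $\t$.

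Hypothesis (iii) now delivers matching dimensions for the three corresponding pieces: $\ka{\pm}(\m_\s)=\ka{\pm}(\s)=\ka{\pm}(\t)=\ka{\pm}(\m_\t)$, $\ka{\pm}(\mathcal{R}_\s)=c\ka{\pm}(\s)=c\ka{\pm}(\t)=\ka{\pm}(\mathcal{R}_\t)$, and $\dim\mathcal{N}_\s=\ka{0}(\s)=\ka{0}(\t)=\dim\mathcal{N}_\t$. Two regular subspaces with matching signatures are $J$-isometrically isomorphic (Theorem \ref{ando caracteriz} applied to the maximal uniformly positive subspaces of prescribed dimension), so I obtain isomorphisms $\m_\s\to\m_\t$ and $\mathcal{R}_\s\to\mathcal{R}_\t$. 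The hyperbolic pieces $\s^\circ\dotplus\mathcal{N}_\s$ and $\t^\circ\dotplus\mathcal{N}_\t$ can be matched by the $J$-biorthogonal basis construction of Lemma \ref{biort}, arranged so that $\s^\circ\mapsto\t^\circ$ and $\mathcal{N}_\s\mapsto\mathcal{N}_\t$. Their $J$-orthogonal sum is a $J$-unitary $U\in\uj$ with $U(\m_\s)=\m_\t$ and $U(\s^\circ)=\t^\circ$, whence $U(\s)=\t$. I expect the main technical obstacle to be the correct placement of the neutral dual companion $\mathcal{N}_\s$: one must verify that it can be chosen inside $\m_\s^{\ort}$ (handled by the explicit construction via $R(P^\#)$), that the resulting $\mathcal{R}_\s$ indeed captures the regular part of $\s^{\ort}$, and that the dimension bookkeeping linking $c\ka{\pm}(\s)$ to $\ka{\pm}(\mathcal{R}_\s)$ is consistent with the pseudo-regularity of both $\s$ and $\s^{\ort}$.
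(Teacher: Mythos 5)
The paper itself gives no proof of this proposition: it is quoted verbatim from Gheondea \cite[Proposition 4.6]{G}, so there is no internal argument to compare yours against. Judged on its own, your proof is essentially correct, and it is very much in the spirit of the machinery the paper builds elsewhere: your concrete choice $\mathcal{N}_\s=R(P^\#)=N(Q_0)^\circ$ for a $J$-normal projection $Q_0$ onto $\s$ is exactly the decomposition $\h=R(E)\sdo R(P+P^\#)\sdo R(F)$ of Remark \ref{desc q y 1-q}, the identification $\s^{\ort}=\s^\circ\sdo R(F)$ is verified in the proof of Proposition \ref{accion t grup chic}, and the gluing of three $J$-isometric isomorphisms along a $J$-orthogonal decomposition into regular pieces (with boundedness coming from the bounded idempotents $E$, $P+P^\#$, $F$) is precisely how that proposition and Lemma \ref{s link} operate; your hyperbolic-block matching is literally Lemma \ref{s link}, which applies because $\ka{0}(\s)=\ka{0}(\t)$. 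The one loose point is your appeal to Theorem \ref{ando caracteriz} for the claim that two regular subspaces with equal signatures are $J$-isometrically isomorphic: that theorem describes $J$-unitaries carrying $\h_+$ onto a \emph{maximal} uniformly positive subspace and does not directly apply to arbitrary regular subspaces. The claim is nevertheless easy to justify without it: write $\m=\m_+\sdo\m_-$ and $\n=\n_+\sdo\n_-$ with uniformly definite parts; on a closed uniformly positive (resp.\ negative) subspace the form $[\cdot,\cdot]$ (resp.\ $-[\cdot,\cdot]$) is an inner product whose norm is equivalent to the ambient one, so matching orthonormal bases of these Hilbert spaces yields bounded $J$-isometric isomorphisms $\m_\pm\to\n_\pm$, whose $J$-orthogonal sum does the job. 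With that substitution, and the bookkeeping $(\s^{\ort})^\circ=\s^\circ$ and $\ka{\pm}(\mathcal{R}_\s)=\ka{\pm}(\s^{\ort})=c\ka{\pm}(\s)$ which you correctly flag and which the cited remarks settle, your (iii)$\Rightarrow$(i) construction is sound, and (i)$\Rightarrow$(ii)$\Rightarrow$(iii) are routine as you state.
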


With the latter result at hand, it is now  straightforward to give a spatial characterization of the orbits.
Moreover, the orbits are the connected components of $\q$.

\begin{prop}\label{comp q}
Let $Q_0,Q \in \q$. The following assertions are equivalent:
\begin{enumerate}
\item[i)] $Q \in \uj \cdot Q_0$.
\item[ii)]  $R(Q)$ and $R(Q_0)$ have the same (three) signatures and the same (two) cosignatures.
\end{enumerate}
Moreover, the connected component of $Q_0$ in $\q$ coincides with  $\uj \cdot Q_0$.
\end{prop}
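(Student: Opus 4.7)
The plan is to combine three ingredients: the local continuous cross section of Theorem \ref{section}, the spatial characterization of $J$-unitary equivalence for pseudo-regular subspaces from Proposition \ref{spatial c}, and the $J$-orthogonal three-part decomposition of Remark \ref{desc q y 1-q}.

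I would first settle the ``Moreover'' clause, from which $\uj\cdot Q_0$ is identified with the connected component of $Q_0$ in $\q$. By Theorem \ref{section}, there is a continuous section $s\colon W\to\uj$ on a neighborhood $W$ of $Q_0$ in $\q$ with $s(Q)Q_0 s(Q)^{\#}=Q$. Thus $W\subseteq \uj\cdot Q_0$, so the orbit is open around $Q_0$; by $\uj$-equivariance it is open around each of its points. Since $\q$ is partitioned into orbits, each orbit is closed as well. Each orbit is also connected, being the image of the arcwise connected group $\uj$ (Proposition \ref{connected uj}) under the continuous orbit map $p_{Q_0}$. Hence $\uj\cdot Q_0$ is precisely the connected component of $Q_0$ in $\q$.

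The implication (i)$\Rightarrow$(ii) is immediate: if $Q=UQ_0U^{\#}$ with $U\in\uj$, then $U$ is a $J$-isometric automorphism of $\h$ sending $R(Q_0)$ onto $R(Q)$ and, as a $J$-unitary, sending $R(Q_0)^{\ort}$ onto $R(Q)^{\ort}$; Proposition \ref{spatial c} then yields the equality of all five invariants. For (ii)$\Rightarrow$(i), Proposition \ref{spatial c} provides $U_1\in\uj$ with $U_1(R(Q_0))=R(Q)$, so setting $\tilde{Q}_0:=U_1 Q_0 U_1^{\#}\in\uj\cdot Q_0$ reduces the problem to finding $V\in\uj$ with $V\tilde Q_0 V^{\#}=Q$ in the case $R(\tilde Q_0)=R(Q)=:\s$, in which case $\s^{\circ}$ is common to both. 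I would then build $V$ piecewise along the two $J$-orthogonal decompositions
\[
\h=R(\tilde E_0)\sdo R(\tilde P_0+\tilde P_0^{\#})\sdo R(\tilde F_0)=R(E)\sdo R(P+P^{\#})\sdo R(F)
\]
from Remark \ref{desc q y 1-q}, using a $J$-isometric isomorphism $V_1\colon R(\tilde E_0)\to R(E)$ between regular subspaces of common signatures $(\ka{+}(\s),\ka{-}(\s))$; another $V_2\colon R(\tilde F_0)\to R(F)$ of common signatures $(c\ka{+}(\s),c\ka{-}(\s))$ (the cosignatures in (ii) ensure the match); and a third $V_3\colon R(\tilde P_0+\tilde P_0^{\#})\to R(P+P^{\#})$, furnished by Lemma \ref{s link}, which preserves $\s^{\circ}$ and sends $N(\tilde Q_0)^{\circ}$ onto $N(Q)^{\circ}$. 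Assembling these three pieces along the $J$-orthogonal decomposition yields $V\in\uj$, and a direct computation on each of the four fine summands, on which $\tilde Q_0$ and $Q$ act as the identity (on the regular-range pieces and on $\s^{\circ}$) or as zero (on the regular-kernel pieces and on $N(\cdot)^{\circ}$), gives $V\tilde Q_0 V^{\#}=Q$.

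The main obstacle is precisely this piecewise assembly. Once the identifications are chosen, showing that $V\in\uj$ is routine, but checking $V\tilde Q_0 V^{\#}=Q$ requires careful accounting of the four-fold actions of $\tilde Q_0$ and $Q$ read off from Proposition \ref{desc}, and the property $V_3(\s^{\circ})=\s^{\circ}$ provided by Lemma \ref{s link} is essential to match the identity action on the common isotropic part.
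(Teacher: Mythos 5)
Your argument is correct, and it shares the paper's two main ingredients (the local cross section of Theorem \ref{section} and Gheondea's Proposition \ref{spatial c}), but it departs from the paper's proof at two points, in both cases doing more explicit work. For ${\rm ii)}\Rightarrow {\rm i)}$ the paper simply observes that $c\ka{\pm}(R(Q))=\ka{\pm}(N(Q))$ and then invokes Proposition \ref{spatial c} applied to the ranges; it does not spell out how a $J$-unitary carrying $R(Q_0)$ onto $R(Q)$ is upgraded to one conjugating $Q_0$ into $Q$. Your reduction to the equal-range case followed by the piecewise assembly along $\h=R(E)\sdo R(P+P^{\#})\sdo R(F)$, with Lemma \ref{s link} handling the neutral middle block and $J$-isometries between regular subspaces of equal signatures handling the outer blocks, fills in exactly this step; it is essentially the construction the paper only carries out later, in Proposition \ref{accion t grup chic}, for projections with a common range. (Your check that $V\tilde Q_0V^{\#}=Q$ is sound: a projection is determined by its range and nullspace, and your $V$ matches both, acting blockwise as you describe.) For the ``Moreover'' clause the paper argues via the local constancy of the five-index map, which needs the equivalence ${\rm i)}\Leftrightarrow{\rm ii)}$ already established, whereas you use the cleaner observation that the orbits are open (by Theorem \ref{section} applied at each of their points), hence clopen since they partition $\q$, and connected by Proposition \ref{connected uj}; this identifies each orbit with a connected component without any reference to the signatures. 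The paper's route is shorter on the page but leans harder on Proposition \ref{spatial c}; yours is more self-contained and makes the conjugating $J$-unitary explicit.
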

\begin{proof}
If $Q\in\q$ then $R(Q)=R(Q)^\circ \sdo \m$ and $N(Q)=N(Q)^\circ \sdo \n$, where $\m$ and $\n$ are regular subspaces. Then, it is easy to see that $R(Q)^{\ort}=R(Q)^\circ \sdo \n$ and
\[
c\ka{\pm} (R(Q))= \ka{\pm} (R(Q)^{\ort})=\ka{\pm} (N(Q)).
\]
Hence, the equivalence between $i)$ and $ii)$  follows from applying Proposition \ref{spatial c} to the ranges of two $J$-normal projections.

Let $C_{Q_0}$ be the connected component of $Q_0$.  Recall that $\uj$ is connected (Proposition \ref{connected uj}).  Therefore $\uj \cdot Q_0$ is connected. Hence $\uj \cdot Q_0 \subseteq C_{Q_0}$.  In order to show the converse inclusion, note that the map
\[   Q \mapsto  (\,\ka{+}(R(Q)),c\ka{+}(R(Q)),\ka{-}(R(Q)),c\ka{-}(R(Q)),\ka{0}(R(Q))\,) \]
is continuous. In fact, if $\| Q- Q_0\|<r_{Q_0}$, where $r_{Q_0}$ is defined in the proof of Theorem \ref{section}, then  there is an operator $U \in \uj$ such that $Q=UQ_0U^{\#}$. According to the equivalence $i)$-$ii)$, it follows that 
the five indices must coincide. This proves that the above map is continuous. Since it takes values on a discrete set, the map has to be constant on $C_{Q_0}$. Now if $Q \in C_{Q_0}$, then the five indices associated to $Q$ are equal to those of $Q_0$. Hence there exists a $J$-unitary such that $Q=UQ_0U^{\#}$.
\end{proof}

The connected components of $\e$ can be obtained as a particular case of the above result.

\begin{coro}
Let $E_0,E \in \e$. The following assertions are equivalent:
\begin{enumerate}
\item[i)] $E \in \uj \cdot E_0$.
\item[ii)]  $R(E)$ and $R(E_0)$ have the same (two) signatures and the same (two) cosignatures.
\end{enumerate}
Moreover, the connected component of $E_0$ in $\e$ coincides with  $\uj \cdot E_0$.
\end{coro}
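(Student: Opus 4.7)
The plan is to derive this corollary as a direct specialization of Proposition \ref{comp q} to the $J$-selfadjoint case. The main observation is that $\e \subseteq \q$, and moreover the orbit $\uj \cdot E_0$ stays inside $\e$: if $E_0^\# = E_0$, then $(UE_0U^\#)^\# = U E_0^\# U^\# = U E_0 U^\#$ for any $U \in \uj$. So the orbit computed in $\q$ coincides with the orbit in $\e$, and Proposition \ref{comp q} applies verbatim.

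For the equivalence i)$\Leftrightarrow$ii), I would note that for $E\in\e$ the range $R(E)$ is a regular subspace, so $R(E)^\circ=\{0\}$ and $\ka{0}(R(E))=0$. The same equality holds for $E_0$, so the isotropic signature is automatically matched. Thus the list of five indices in Proposition \ref{comp q} collapses to the four indices $\ka{+}(R(E)),\ka{-}(R(E)),c\ka{+}(R(E)),c\ka{-}(R(E))$ appearing in ii), and the equivalence follows immediately from Proposition \ref{comp q}.

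For the connected-component claim, let $C_{E_0}^{\,\e}$ and $C_{E_0}^{\,\q}$ denote the connected components of $E_0$ in $\e$ and $\q$ respectively, both considered with the topology inherited from $L(\h)$. Since $\uj$ is connected (Proposition \ref{connected uj}) and $U\mapsto UE_0U^\#$ is continuous, the orbit $\uj\cdot E_0$ is a connected subset of $\e$ containing $E_0$, hence $\uj\cdot E_0 \subseteq C_{E_0}^{\,\e}$. Conversely, any continuous path in $\e$ joining $E_0$ to some $E$ is also a continuous path in $\q$, so $C_{E_0}^{\,\e}\subseteq C_{E_0}^{\,\q}=\uj\cdot E_0$ by Proposition \ref{comp q}. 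Combining the two inclusions yields $C_{E_0}^{\,\e}=\uj\cdot E_0$.

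I do not expect any real obstacle here; the entire argument is bookkeeping. The only subtlety to flag is checking that the orbit under $\uj$ does not leave $\e$ (handled by the calculation above) and that it is therefore legitimate to transport both the spatial characterization and the connected-component identification from $\q$ down to the closed subset $\e$.
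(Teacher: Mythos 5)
Your proposal is correct and follows essentially the same route as the paper, which simply presents the corollary as a particular case of Proposition \ref{comp q}: the orbit of a $J$-selfadjoint projection stays in $\e$, the isotropic signature vanishes for regular ranges, and the component identification transfers. One tiny remark: for the inclusion $C^{\,\e}_{E_0}\subseteq C^{\,\q}_{E_0}$ you do not need paths (connected components need not be path components in general); it suffices that $C^{\,\e}_{E_0}$ is a connected subset of $\q$ containing $E_0$.
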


\section{Differential structure of $\q$}

The following is a well-known criterion to determine if a proper subset of a manifold is indeed a submanifold, see \cite[Proposition 8.7]{upmeier85}. 

\begin{prop}\label{sous varietes}
Consider two Banach manifolds $M$ and $N$.  Suppose that $g:M \to N$ is an analytic inmersion and a homeomorphism onto $N'=g(M)$. Then $N'$ is a submanifold of $N$ and the mapping $g:M \to N'$ is bianalytic. 
\end{prop}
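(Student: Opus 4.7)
The strategy is to apply the local form of immersions at each point of $M$ and then use the homeomorphism hypothesis to promote the resulting local slice pictures into genuine submanifold charts for $N'$ inside $N$.

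First, fix $m_0\in M$ and set $n_0=g(m_0)$. Since $g$ is an analytic immersion, the differential $dg_{m_0}:T_{m_0}M\to T_{n_0}N$ is injective and its range is a closed, topologically complemented subspace of $T_{n_0}N$. The standard local straightening theorem for immersions between Banach manifolds (cf. \cite[Prop.~8.6]{upmeier85}) then yields open neighborhoods $U$ of $m_0$ in $M$ and $V$ of $n_0$ in $N$, together with analytic charts $\varphi:U\to E_1$ and $\psi:V\to E_1\oplus E_2$, such that
\[
\psi\circ g\circ \varphi^{-1}(x)=(x,0)\qquad\text{for every } x\in\varphi(U).
\]
In particular $g(U)=\psi^{-1}\bigl(\psi(V)\cap(E_1\oplus\{0\})\bigr)$, which realizes $g(U)$ as a slice of $V$ but does not yet show that $N'\cap V$ coincides with this slice.

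Second, invoke the homeomorphism hypothesis. Since $g:M\to N'$ is a homeomorphism, $g(U)$ is open in $N'$, so there exists an open set $W\subseteq N$ with $g(U)=W\cap N'$. Replacing $V$ by $V\cap W$, we may assume $g(U)=V\cap N'$, and then $\psi$ identifies $V\cap N'$ with the slice $\psi(V)\cap(E_1\oplus\{0\})$. Thus $\psi$ is a submanifold chart for $N'$ around $n_0$. As $m_0$ was arbitrary, such charts cover $N'$, so $N'$ is an analytic submanifold of $N$, and the submanifold atlas is, by construction, analytically compatible with the charts $\varphi$ of $M$: in these adapted coordinates $g$ is simply the identity map $x\mapsto x$. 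Consequently $g:M\to N'$ is bianalytic.

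The main obstacle is precisely the passage from the local slice produced by the immersion theorem to an actual submanifold chart for $N'$ in $N$. Without the homeomorphism assumption the slice $\psi^{-1}(E_1\oplus\{0\})$ may meet $N'$ in a set strictly larger than $g(U)$ — the classical counterexamples are injective immersions of $\RR$ with dense image in a torus, or figure-eight type immersions, where $g(U)$ fails to be relatively open in $N'$. It is exactly the topological hypothesis that $g$ be a homeomorphism onto its image that rules out this pathology and lets the immersion data glue into a submanifold structure.
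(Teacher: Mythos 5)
This proposition is not proved in the paper at all: it is quoted as a known criterion from \cite[Proposition 8.7]{upmeier85}, so there is no internal argument to compare against. Your proof is the standard one behind that citation --- straighten the analytic immersion locally into a product chart, then use the hypothesis that $g$ is a homeomorphism onto $N'$ to shrink the ambient neighborhood so that the slice meets $N'$ exactly in $g(U)$, which yields submanifold charts in which $g$ is the identity --- and it is correct, including the correct identification of the only delicate point (without the homeomorphism hypothesis the slice may meet $N'$ in more than $g(U)$).
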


This criterion will be  used to show that $\q$ is a submanifold of $L(\h)$. Note that one can restrict to the connected components of $\q$ given by the orbits $\uj \cdot Q_0$, $Q_0 \in \q$. By Proposition \ref{kernel sup}, $\uj \cdot Q_0$ has a manifold structure compatible with the quotient topology. Moreover, the map $p_{Q_0}$ is an analytic submersion with this manifold structure. Equivalently, $p_{Q_0}$ admits local analytic cross sections (see \cite[Corollary 8.3]{upmeier85}). 
Note that the following diagram commutes
\begin{displaymath}
\xymatrix{
\uj \ar[r]^{p_{Q_0}} \ar[dr]_{\tilde{p}_{Q_0}} & \uj \cdot Q_0  \ar[d]^{i}  \hspace{-0.5cm}& \hspace{-.5cm} \simeq \uj /\mathcal{G}  \\
                                   & L(\h)         \hspace{-0.5cm} &  \hspace{-0.5cm}  
}
\end{displaymath}
where $i$ is the inclusion map and  $\tilde{p}_{Q_0}(U)=UQ_0U^{\#}$. The map $\tilde{p}_{Q_0}$ is clearly analytic because it consists in multiplication and inversion in $L(\h)$. The inclusion map can be locally written as $i=\tilde{p}_{Q_0} \circ s$, where $s$ is a analytic section of $p_{Q_0}$. Hence, $i$ is analytic.

 To prove that $i$ is an inmersion (i.e. its differential map is injective and has complemented range), notice that the range of the differential at $Q\in \uj \cdot Q_0$ of $i$ is precisely the tangent space $T_Q (\uj \cdot Q_0)$. The latter is computed as derivatives of smooth curves in the orbit, and it is given by
\[ T_Q (\uj \cdot Q_0)= \{  \, XQ-QX  \, : \, X \in \lie \, \}. \]
On the other hand, it was shown that the quotient and the inherited topologies coincide in the orbits (Theorem \ref{section}). Hence, to see that $\uj \cdot Q_0$ is a submanifold of $L(\h)$ it is sufficient to find a complement of $T_{Q_0} (\uj \cdot Q_0)$ in $L(\h)$.

To this end, if $Q_0\in \Q$ consider again the decompositions 
\[
Q_0=E_0+P_0 \ \ \ \text{and} \ \ \ I-Q_0= F_0 + P_0^\#,
\] 
given in Proposition \ref{desc}. Let  $XQ_0 - Q_0X$ be a tangent vector of the orbit $\uj \cdot Q_0$ at the point $Q_0$. Since $X^\#=-X$ and $E_0=E_0^{\#}$, the $J$-selfadjoint and the $J$-antihermitian parts of $XQ_0-Q_0X$ are given by
$$XE_0 - E_0X  + \frac{1}{2}(X(P_0+P_0^{\#}) - (P_0+P_0^{\#})X);$$ 
and  
$$
\frac{1}{2i}(X(P_0-P_0^{\#}) - (P_0-P_0^{\#})X),$$ 
respectively. Clearly,  $T_{Q_0}(\uj \cdot Q_0)$  will be complemented in $L(\h)$ if 
\[ 
\mathcal{L}_s:=\{  \, XE_0 - E_0X  + \frac{1}{2}(X(P_0+P_0^{\#}) - (P_0+P_0^{\#})X)  \, : \, X^\#=-X \, \}
\] 
is complemented in the subspace of $J-$selfadjoint operators  $i\lie$ and
\[
\mathcal{L}_a:=\{  \,  X(P_0-P_0^{\#}) - (P_0-P_0^{\#})X \, : \,  X^\#=-X \, \}
\] 
is complemented in $\lie$.

\medskip

\begin{lem}\label{sup self}
 $\mathcal{L}_s$ is complemented in $i \lie$. 
\end{lem}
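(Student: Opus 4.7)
The strategy is to realize $\mathcal{L}_s$ as the ``block off-diagonal'' subspace of $i\lie$ with respect to the canonical $J$-orthogonal decomposition of $\h$ associated with $Q_0$, and then to exhibit a continuous linear complement built from the three projections of that decomposition.

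Set $R_0 := P_0 + P_0^\#$. By Remark \ref{desc q y 1-q} one has $\h = R(E_0) \sdo R(R_0) \sdo R(F_0)$, so $E_0, R_0, F_0$ are pairwise $J$-orthogonal $J$-selfadjoint projections with $E_0 + R_0 + F_0 = I$. Every $Z \in L(\h)$ then admits a $3 \times 3$ block decomposition $(Z_{ij})$ with respect to this sum. The key identification I would prove is
$$
\mathcal{L}_s \;=\; \{\,Z \in i\lie \,:\, E_0 Z E_0 = R_0 Z R_0 = F_0 Z F_0 = 0\,\}.
$$
The inclusion $\subseteq$ is a direct block calculation: for $X \in \lie$ with blocks $X_{ij}$, both $XE_0 - E_0 X$ and $XR_0 - R_0 X$ have vanishing diagonal blocks, so the same holds for $XE_0 - E_0X + \tfrac12(XR_0 - R_0X)$. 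For the reverse inclusion, given a $J$-selfadjoint $Z$ with vanishing diagonal blocks, I would define $X \in \lie$ by $X_{ii}=0$, $X_{12} = -2Z_{12}$, $X_{13} = -Z_{13}$, $X_{23} = 2Z_{23}$, and $X_{ji} := -X_{ij}^\#$ for $i<j$; the agreement of the resulting commutator expression with $Z$ is then a routine check that uses precisely the relations $Z_{ji} = Z_{ij}^\#$ provided by $Z^\# = Z$. This $3 \times 3$ block bookkeeping is the main (and only) computational step, and the place where care is needed.

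With $\mathcal{L}_s$ so identified, the complement is immediate. Define $\Phi : L(\h) \to L(\h)$ by
$$
\Phi(Z) \;=\; E_0 Z E_0 + R_0 Z R_0 + F_0 Z F_0.
$$
Because $E_0 + R_0 + F_0 = I$ and the three projections are pairwise $J$-orthogonal (hence also Hilbert-space orthogonal after factoring out $J$), $\Phi$ is a bounded linear idempotent on $L(\h)$. Since $E_0, R_0, F_0$ are $J$-selfadjoint, $\Phi$ maps $i\lie$ into itself; by the identification above its kernel on $i\lie$ is exactly $\mathcal{L}_s$, while its range consists of those $Z \in i\lie$ that are block-diagonal with respect to the decomposition $\h = R(E_0) \sdo R(R_0) \sdo R(F_0)$. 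Hence $R(\Phi|_{i\lie})$ is a closed topological complement of $\mathcal{L}_s$ in $i\lie$, which proves the lemma.
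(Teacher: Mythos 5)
Your argument is correct and follows essentially the same route as the paper: both proofs use the $3\times 3$ block decomposition relative to $\h=R(E_0)\sdo R(P_0+P_0^\#)\sdo R(F_0)$ to identify $\mathcal{L}_s$ with the $J$-selfadjoint operators whose diagonal blocks vanish, and then take as complement the block-diagonal $J$-selfadjoint operators (your pinching map $\Phi$ just makes explicit the complement the paper states). The only slip is a sign in your witness for the reverse inclusion: since the $(2,3)$ block of $XE_0-E_0X+\tfrac{1}{2}(XR_0-R_0X)$ equals $-\tfrac{1}{2}X_{23}$, you should take $X_{23}=-2Z_{23}$ rather than $2Z_{23}$, which does not affect the argument.
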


\begin{proof}
Set $\s_1=R(E_0)$,  $\s_2=R(P_0+P_0^\#)$ and $\s_3=R(F_0)$ and consider the $J$-orthogonal decomposition $\h=\s_1 \sdo \s_2 \sdo \s_3$.
If a $J$-antihermitian operator $X$ is represented as a block-operator matrix according to this decomposition:
\[
  \begin{blockarray}{lrrrl}
    \begin{block}{l(rrr)l}
  \, & X_{11} &   X_{12} &  X_{13} \, & \matindex{$\s_1$} \\
  \, X=&-X_{12}^{\#} &  X_{22}  &  X_{23} \, & \matindex{$\s_2$} \\
   \, &-X_{13}^{\#}  &  -X_{23}^{\#} &  X_{33}\, &  \matindex{$\s_3$} \\
    \end{block}
  \end{blockarray},
\]
then an operator in  the subspace $\mathcal{L}_s$ 
is represented as
$$XE_0 - E_0X  + \frac{1}{2}(X(P_0+P_0^{\#}) - (P_0+P_0^{\#})X)=\left( \begin{array}{ccc}
0 &   -\frac{1}{2}X_{12} &  -\frac{1}{2}X_{13} \\ 
-\frac{1}{2}X_{12}^{\#} &  0  &  -\frac{1}{2}X_{23}  \\
 -\frac{1}{2}X_{13}^{\#}  &  -\frac{1}{2}X_{23}^{\#} &  0 \\
\end{array}
\right).$$
Therefore, the subspace $\mathcal{L}_s$ can be described with three parameters as
\[ \mathcal{L}_s=
\left\{ \, 
\left( \begin{array}{ccc}
0 &   A & B  \\ 
A^{\#} &  0  &  C  \\
 B^{\#}  &  C^{\#} &  0 \\
\end{array}
\right)
\, : 
\, A \in L(\s_2, \s_1), \, B \in L(\s_3, \s_1), \, C \in L(\s_3, \s_1) \,  
\right\}.
\]
From this representation, it is easy to see that $\mathcal{L}_s$ is complemented in the subspace of $J$-selfadjoint operators. In fact, a complement is given by the subspace of $J$-selfadjoint operators which are  block-diagonal according to the decomposition considered above. 
\end{proof}

As in the previous result, the main idea in the proof of the following lemma is to find an alternative description of $\mathcal{L}_a$ by means of $3 \times 3$ block-operator matrices. 
However, the decomposition will be given in terms of different projections. 

\begin{lem}\label{sup antih}
$\mathcal{L}_a$ is complemented in $ \lie$. 
\end{lem}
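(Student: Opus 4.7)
The plan is to mimic the block-matrix strategy of Lemma \ref{sup self}, working with the same $J$-orthogonal decomposition $\h = \s_1 \sdo \s_2 \sdo \s_3$ where $\s_1 = R(E_0)$, $\s_2 = R(P_0 + P_0^\#)$, $\s_3 = R(F_0)$. First I would analyze the structure of the operator $A := P_0 - P_0^\#$. Since $E_0 P_0 = P_0 E_0 = F_0 P_0 = P_0 F_0 = 0$ (Remark \ref{desc q y 1-q}), and likewise with $P_0^\#$ in place of $P_0$, the operator $A$ is supported on $\s_2$, so $A = \mathrm{diag}(0, D, 0)$ where $D := \tilde P_0 - \tilde P_0^\# \in L(\s_2)$ is the restriction of $A$ to $\s_2$. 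Two features of $D$ will drive the argument: $D^\# = -D$, and $D^2 = \tilde P_0 + \tilde P_0^\# = I_{\s_2}$, because $P_0 P_0^\# = P_0^\# P_0 = 0$ and $P_0 + P_0^\#$ is the canonical $J$-selfadjoint projection onto $\s_2$. Thus $D$ is a $J$-antihermitian symmetry on $\s_2$.

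Next, writing a generic $X \in \lie$ in block form as in Lemma \ref{sup self}, a direct computation yields
\[
[X, A] = \begin{pmatrix} 0 & X_{12} D & 0 \\ D X_{12}^\# & X_{22} D - D X_{22} & -D X_{23} \\ 0 & -X_{23}^\# D & 0 \end{pmatrix}.
\]
Because $D$ is invertible, the blocks $X_{12} D$ and $-D X_{23}$ sweep out all of $L(\s_2, \s_1)$ and $L(\s_3, \s_2)$ respectively as $X$ varies in $\lie$, while the $(2,1)$ and $(3,2)$ blocks are then forced by $J$-antihermiticity. The interesting block is $(2,2)$: using the algebraic identity $[X_{22}, D] = 2 X_{22,o}\, D$, where $X_{22,o} := \tfrac{1}{2}(X_{22} - D X_{22} D)$ is the part of $X_{22}$ anti-commuting with $D$, I will show that the image of $X_{22} \mapsto [X_{22}, D]$ restricted to $\{X_{22} \in L(\s_2) : X_{22}^\# = -X_{22}\}$ equals
\[
\mathcal{A} := \{Y \in L(\s_2) : Y^\# = -Y,\ Y D + D Y = 0\}.
\]
The inclusion $\subseteq$ is immediate from the identity; for $\supseteq$, given $Y \in \mathcal{A}$ one checks that $X_{22} := \tfrac{1}{2} Y D$ is $J$-antihermitian (using $D^\# = -D$ together with $Y D + D Y = 0$) and satisfies $[X_{22}, D] = Y$ (using $D^2 = I_{\s_2}$).

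Combining these observations gives the complete block description of $\mathcal{L}_a$: it consists of those $Y \in \lie$ whose $(1,1)$, $(3,3)$ and $(1,3)$ blocks vanish and whose $(2,2)$ block lies in $\mathcal{A}$. To conclude that $\mathcal{L}_a$ is complemented, I will exhibit the explicit bounded linear projection $\Pi : \lie \to \lie$ defined by
\[
\Pi(Y) = \begin{pmatrix} 0 & Y_{12} & 0 \\ -Y_{12}^\# & \tfrac{1}{2}(Y_{22} - D Y_{22} D) & Y_{23} \\ 0 & -Y_{23}^\# & 0 \end{pmatrix}.
\]
Boundedness is clear, idempotence follows from $D^2 = I_{\s_2}$, and the image is $\mathcal{L}_a$ by the block description above. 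The main potential obstacle is purely bookkeeping: keeping track of signs in the computation of $[X, A]$ and verifying that each ingredient respects $J$-antihermiticity, but no new conceptual tool beyond what entered Lemma \ref{sup self} is required.
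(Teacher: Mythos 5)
Your proposal is correct, but it takes a genuinely different route from the paper's. The paper does not keep the $J$-orthogonal decomposition $\h=R(E_0)\sdo R(P_0+P_0^{\#})\sdo R(F_0)$ of Lemma \ref{sup self}; instead it diagonalizes $A_0=P_0-P_0^{\#}$ by passing to the direct sum $\h=\t_1\dotplus\t_2\dotplus\t_3$ with $\t_1=R(R_0)$, $\t_2=R(R_0^{\#})$, $\t_3=R(I-A_0^2)$, where $R_0=\tfrac12(A_0^2+A_0)$ (in effect $\t_1=R(P_0)$, $\t_2=R(P_0^{\#})$, $\t_3=R(E_0)\sdo R(F_0)$), so that $A_0$ acts as $I$, $-I$, $0$ on the three summands. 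In those coordinates $XA_0-A_0X$ is, up to harmless constants on the blocks, just the off-diagonal part of $X$; hence $\mathcal{L}_a$ is the set of off-diagonal $J$-antihermitian block matrices and the block-diagonal ones form a complement, the only subtlety being that $\t_1,\t_2$ are a neutral pair, so the adjoint bookkeeping couples the $(1,1)$ and $(2,2)$ blocks ($X_{22}=-X_{11}^{\#}$). You instead keep the decomposition of Lemma \ref{sup self}, note that $A_0$ restricts to a $J$-antihermitian symmetry $D$ on $\s_2$, and handle the one nontrivial block through the anticommutant of $D$ and the averaging map $Y_{22}\mapsto\tfrac12(Y_{22}-DY_{22}D)$. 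Your computation of $[X,A]$ is right, your identification of the $(2,2)$-image with $\{Y\in L(\s_2):Y^{\#}=-Y,\ YD+DY=0\}$ is right (both inclusions check out, using $D^2=I_{\s_2}$ and $D^{\#}=-D$), and since the blocks $X_{12},X_{22},X_{23}$ can be varied independently within $\lie$, the image is indeed the set you describe, so $\Pi$ is a bounded idempotent with range $\mathcal{L}_a$. What your version buys is uniformity with Lemma \ref{sup self} and an explicit projection onto $\mathcal{L}_a$ rather than merely a complementary subspace; what the paper's eigenspace decomposition buys is that no anticommutant analysis is needed at all. The only points to add when writing it up: verify in one line that $\tfrac12(Y_{22}-DY_{22}D)$ is again $J$-antihermitian, so $\Pi$ maps $\lie$ into $\lie$, and observe that $\Pi$ is real-linear, which suffices because $\lie$ is a real subspace of $L(\h)$.
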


\begin{proof}
Set $A_0=P_0-P_0^{\#}$. Note that $A_0^2=P_0+P_0^{\#}$ and $A_0^3=A_0$. In particular, $A_0 ^2$ is a $J$-selfadjoint 
projection. Now set
\[  R_0=\frac{1}{2}(A_0 ^2 + A_0). \]
From the properties of $A_0$, it follows that 
$$R_0^2=\frac{1}{4}(A_0^4 + 2 A_0 ^3 + A_0 ^2)=\frac{1}{4}(2A_0^2   +   2A_0)=R_0,$$ 
hence $R_0$ is a projection. Then $R_0^{\#}$ is also a projection.   Taking into account that $A_0^{\#}=-A_0$, one gets 
$R_0 + R_0^{\#}=A_0^2$. Further useful relations between these projections are the following:
\[ \, \, \, A_0R_0=R_0, \, \, \ R_0A_0=R_0 , \, \, \, -R_0^{\#} A_0=R_0^{\#}, \, \, \, - A_0R_0^{\#}=R_0^{\#}\, . \]
Set $\t_1=R(R_0)$, $\t_2=R(R_0^{\#})$ and $\t_3=R(I-A_0 ^2)$. Hence the space $\h$ can be decomposed as $\h=\t_1\dotplus \t_2\dotplus \t_3$.
Next, suppose that $X=-X^{\#}$ is represented as
\[
  \begin{blockarray}{lrrrl}
    \begin{block}{l(rrr)l}
  \, &  X_{11} &   X_{12} &  X_{13} \, & \matindex{$\t_1$} \\
  \, X= & X_{21} &  X_{22}  &  X_{23} \, & \matindex{$\t_2$} \\
   \, & X_{31}  &  X_{32} &  X_{33}\, &  \matindex{$\t_3$} \\
    \end{block}
  \end{blockarray}
\]
From the properties of the projections $R_0$, $R_0^{\#}$ and $I-A_0^2$,  it is possible to consider only five parameters to represent $X$ as a block operator-matrix, that is
$$X=\left( \begin{array}{ccc}
X_{11} &   X_{12} &  X_{13} \\ 
-X_{12}^{\#} &  -X_{11}^{\#}  &  X_{23}  \\
 -X_{23}^{\#}  &  -X_{13}^{\#} &  X_{33} \\
\end{array}
\right),$$
\noi where $X_{33}^{\#}=-X_{33}$. On the other hand, the operator $XA_0-A_0X \in \mathcal{L}_a$ is represented as
\[ XA_0-A_0X = 
\left( \begin{array}{ccc}
0 &   2X_{12} &  X_{13} \\ 
-2X_{12}^{\#} &  0  &  X_{23}  \\
 -X_{23}^{\#}  &  -X_{13}^{\#} &  0 \\
\end{array}
\right).
\]
Therefore, 
$$\mathcal{L}_a=\bigg\{ \, 
 \left( \begin{array}{ccc}
0 &   A & B  \\ 
-A^{\#} &  0  &  C  \\
 -C^{\#}  &  -B^{\#} &  0 \\
\end{array}
\right)  \, : 
\, A \in L(\t_2 , \t_1),\, B \in L(\t_3 , \t_1),\, C \in L(\t_3 , \t_2) \, \bigg\}. $$
Now the subspace $\mathcal{L}_a$ can be easily complemented in $\lie$ as follows:
\[    \lie = \mathcal{L}_a \oplus \bigg\{  \,     
\left( \begin{array}{ccc}
Y &   0 & 0  \\ 
0 &  -Y^{\#}  &  0  \\
 0  &  0 &  Z \\
\end{array}
\right)
\, : \, Y \in L(\t_1) ,\, Z \in L(\t_3), \, Z=-Z^{\#} \, \bigg\}. \qedhere        \]
\end{proof}

\noi The main facts on the differential structure of $\q$ and $\e$ are collected in the following result.

\begin{teo}\label{q submanifold lh}
The following assertions hold:
\begin{enumerate}
\item[i)] $\q$ is an analytic submanifold of $L(\h)$.
\item[ii)] $\e$ is an analytic submanifold of $L(\h)$.
\item[iii)] The map $F:\q \to \e$,  $F(Q)=QQ^\#$,  is a real analytic submersion.
\end{enumerate}
\end{teo}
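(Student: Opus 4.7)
The plan is to apply the submanifold criterion of Proposition \ref{sous varietes} to each $\uj$-orbit in $\q$. Since orbits coincide with connected components of $\q$ (Proposition \ref{comp q}), and the local section in Theorem \ref{section} in fact shows that every $Q\in\q$ sufficiently close to $Q_0$ lies in $\uj\cdot Q_0$, each orbit is open and closed in $\q$; so it suffices to prove that a single orbit $\uj\cdot Q_0$ is a real analytic submanifold of $L(\h)$. Its intrinsic analytic structure is supplied by Proposition \ref{kernel sup}; Theorem \ref{section} identifies the quotient topology on $\uj\cdot Q_0$ with the relative topology inherited from $L(\h)$, so the inclusion $i:\uj\cdot Q_0 \hookrightarrow L(\h)$ is a homeomorphism onto its image. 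Analyticity of $i$ follows from the local factorization $i=\tilde{p}_{Q_0}\circ s$, where $s$ is a local analytic section of $p_{Q_0}$ (submersions of Banach--Lie homogeneous spaces admit such sections, see \cite[Corollary 8.3]{upmeier85}).

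The only remaining condition is that $i$ be an immersion, and this is where the real work sits. The tangent space is
\[
T_{Q_0}(\uj\cdot Q_0)=\{\,XQ_0-Q_0X : X\in\lie\,\},
\]
and the plan is to split it along the real direct sum $L(\h)=i\lie\oplus\lie$: for $V=XQ_0-Q_0X$, the $J$-selfadjoint component $\tfrac12(V+V^\#)$ equals $XE_0-E_0X+\tfrac12(X(P_0+P_0^\#)-(P_0+P_0^\#)X)\in\mathcal{L}_s$, and the $J$-antihermitian component $\tfrac12(V-V^\#)$ equals $\tfrac12(X(P_0-P_0^\#)-(P_0-P_0^\#)X)\in\mathcal{L}_a$. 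A complement of $T_{Q_0}(\uj\cdot Q_0)$ in $L(\h)$ is then obtained by combining a complement of $\mathcal{L}_s$ in $i\lie$ with one of $\mathcal{L}_a$ in $\lie$, precisely as furnished by Lemmas \ref{sup self} and \ref{sup antih}. This is the main obstacle and is handled entirely by those two lemmas.

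For (ii), observe that $\uj$-conjugation preserves $J$-selfadjointness: if $Q_0\in\e$ (equivalently $P_0=0$), then $(UQ_0U^\#)^\#=UQ_0^\#U^\#=UQ_0U^\#$, so the whole orbit $\uj\cdot Q_0$ lies in $\e$. Hence $\e$ is a union of connected components of $\q$ and is automatically an open submanifold of $\q\subseteq L(\h)$. (Alternatively, the argument of (i) specializes directly, with Lemma \ref{sup self} in the case $P_0=0$ delivering the required complement.)

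For (iii), $F$ is real analytic because $Q\mapsto Q^\#=JQ^*J$ is real-linear continuous and $F$ is then a real polynomial in $Q,Q^\#$; and $F$ takes values in $\e$ since the decomposition $Q=E+P$ of Proposition \ref{desc}, together with the orthogonality relations in Remark \ref{desc q y 1-q}, yields $QQ^\#=E\in\e$. The key structural fact is $\uj$-equivariance, $F(UQU^\#)=UF(Q)U^\#$. The plan for submersiveness at $Q_0\in\q$ is to exhibit a local analytic section through $Q_0$: choose a local analytic section $s$ of $p_{E_0}$ around $E_0=F(Q_0)$ with $s(E_0)=I$, and define $\sigma(E)=s(E)Q_0 s(E)^\#$. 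By construction $\sigma$ is analytic, $\sigma(E_0)=Q_0$, $\sigma$ takes values in $\uj\cdot Q_0\subseteq\q$, and equivariance gives $F(\sigma(E))=s(E)Q_0Q_0^\# s(E)^\#=s(E)E_0 s(E)^\#=E$. Existence of such a local analytic section at every point is equivalent to $F$ being a real analytic submersion, which closes the argument.
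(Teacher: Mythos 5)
Your proposal follows the paper's proof in essentially the same way: part (i) uses the same criterion (Proposition \ref{sous varietes}), the same topological input (Theorem \ref{section} plus the identification of orbits with connected components), and the identical splitting of a tangent vector $XQ_0-Q_0X$ into its $J$-selfadjoint and $J$-antihermitian parts, with complementation supplied by Lemmas \ref{sup self} and \ref{sup antih}. Your shortcut for (ii) (that $\e$ is a union of connected components of $\q$, hence open in it) and your explicit construction in (iii) of local analytic sections of $F$ from a section of $p_{E_0}$ together with $\uj$-equivariance are just unpackings of the paper's ``analogous to (i)'' and of its appeal to \cite[Corollary 8.4]{upmeier85} applied to the commuting triangle $F\circ p_{Q_0}=p_{E_0}$, so the argument is the same in substance.
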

\begin{proof}
${\rm i)}$ The assumptions in the criterion stated in Proposition \ref{sous varietes} are verified in each connected component of $\q$. Indeed, it has been shown in Theorem  \ref{section} that the quotient topology of $\uj \cdot Q_0$ coincides with the topology inherited from $L(\h)$. In addition, the tangent space $T_{Q_0}(\uj \cdot Q_0)$ is complemented in $L(\h)$ by Lemmas \ref{sup self} and \ref{sup antih}. 
But this says that the range of the differential map of the inclusion $\uj \cdot Q_0 \hookrightarrow L(\h)$ is complemented in $L(\h)$. So, the proof is  completed. 

\smallskip

\noi ${\rm ii)}$ It is analogous to the proof of ${\rm i)}$.

\smallskip 

\noi ${\rm iii)}$ It suffices to prove the statement for a connected component  $\uj \cdot Q_0$  of $\q$ and  a connected component $\uj \cdot E_0$ of $\e$, where $E_0=Q_0Q_0^{\#}$. According to Proposition \ref{sous varietes}, the identity map $\uj \cdot Q_0 \simeq \uj / \mathcal{G} \to \uj \cdot Q_0 \subseteq L(\h)$ is bianalytic. Thus, if one considers the submanifold structure in $\uj \cdot Q_0$, then the map $p_{Q_0}$ is also an analytic submersion. Analogously, the map $p_{E_0}$ is an analytic submersion when this orbit has the submanifold structure. 

Next note that the following diagram commutes 
\begin{displaymath}
\xymatrix{
\uj \ar[r]^{p_{Q_0}} \ar[dr]_{p_{E_0}} & \uj \cdot Q_0 \ar[d]^{F}  \hspace{-0.5cm}& \hspace{-.5cm}  \\
                                   & \uj \cdot E_0             \hspace{-0.5cm} &  \hspace{-0.5cm}  
}
\end{displaymath}
Since $p_{Q_0}$ is a surjective analytic submersion and $p_{E_0}$ is an analytic submersion, it is a well-known fact that $F$ turns out to be an analytic submersion (see for instance \cite[Corollary 8.4]{upmeier85}).
\end{proof}

\section{Covering space structure of $\q_{\s}$}

For a fixed pseudo-regular subspace $\s$, consider the set of $J$-normal projections onto $\s$, i.e.
\[
\Q_{\s} =\{Q\in \Q:\, R(Q)=\s\}. 
\]
Clearly, the group $\uj$ does not leave $\q_{\s}$ invariant. In order to find a suitable group acting on $\q_{\s}$, one can restrict to
the subgroup of $\uj$ given by
\[ \u_{\s}=\{ \, U \in \uj \, : \,  U(\s)=\s \, \}.  \]
It is easy to see that if $U\in \u_\s$ then $U(\s^{\ort})=\s^{\ort}$ and $U(\s^{\circ})=\s^{\circ}$.

\medskip
As before, the action of $\u_\s$ on $\q_\s$ 
is defined by $U \cdot Q=UQU^{\#}$, where $U \in \u_{\s}$ and $Q \in \q_{\s}$.

The following result was proved in \cite[Proposition 3.1]{G}. Below there is another proof with an explicit 
construction of the $J$-isometric isomorphism. This formula will be helpful later. Along this section, when  $\t_1$, $\t_2$ are two (closed) subspaces of $\s$ such that $\t_1 \dot{+} \t_2=\s$, the projection in $L(\s)$  with range $\t_1$ and nullspace $\t_2$ is denoted by $P_{\t_1 // \t_2}$.

\begin{lem}\label{j isom iso}
Let $\s$ be a pseudo-regular subspace of $\h$. If $\m_1$, $\m_2$ are two regular subspaces such that $\s=\m_1 [\dot{+}] \s^{\circ}=\m_2 [\dot{+}] \s^{\circ}$, then 
$(P_{\m_2//\s^{\circ}} )|_{\m_1}: \m_1 \to \m_2$ is a $J$-isometric isomorphism.
\end{lem}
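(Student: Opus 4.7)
The plan is to verify in order: well-definedness/boundedness, injectivity, surjectivity, and the $J$-isometry property; boundedness of the inverse will then be automatic by the open mapping theorem.

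First, note that $P_{\m_2//\s^{\circ}}\in L(\s)$ since $\s=\m_2\dotplus\s^{\circ}$ is a topological direct sum, so its restriction to the closed subspace $\m_1$ is bounded and linear. For injectivity, suppose $m_1\in\m_1$ satisfies $P_{\m_2//\s^{\circ}}(m_1)=0$; then $m_1\in\s^{\circ}$. The key observation is that $\s^{\circ}\subseteq \s^{[\perp]}\subseteq\m_1^{[\perp]}$ (since $\m_1\subseteq\s$), so $m_1\in\m_1\cap\m_1^{[\perp]}=\m_1^{\circ}$. Because $\m_1$ is regular, $\m_1^{\circ}=\{0\}$, forcing $m_1=0$.

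For surjectivity, given $m_2\in\m_2\subseteq\s$, decompose $m_2=m_1+s$ with $m_1\in\m_1$ and $s\in\s^{\circ}$ using $\s=\m_1\dotplus\s^{\circ}$. Applying $P_{\m_2//\s^{\circ}}$ yields $P_{\m_2//\s^{\circ}}(m_1)=m_2$, so every element of $\m_2$ is hit. Hence the restriction is a continuous bijection between Banach spaces, and the open mapping theorem guarantees that its inverse is bounded.

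Finally, for the $J$-isometry property, pick $m_1,m_1'\in\m_1$ and write $m_1=m_2+s$, $m_1'=m_2'+s'$ with $m_2,m_2'\in\m_2$ and $s,s'\in\s^{\circ}$. Expanding
\[
[m_1,m_1']=[m_2,m_2']+[m_2,s']+[s,m_2']+[s,s'],
\]
the last three summands vanish because $\s^{\circ}\subseteq\s^{[\perp]}$ and $m_2,m_2',s,s'\in\s$. Therefore $[m_1,m_1']=[m_2,m_2']=[P_{\m_2//\s^{\circ}}(m_1),P_{\m_2//\s^{\circ}}(m_1')]$, proving the $J$-isometry. The only conceptually subtle point is injectivity, where one must invoke $\s^{\circ}\subseteq\m_1^{[\perp]}$ together with the regularity of $\m_1$; once this is in hand, the rest is a direct bookkeeping with the decomposition.
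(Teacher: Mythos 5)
Your proposal is correct and follows essentially the same route as the paper: surjectivity via decomposing an element of $\m_2$ along $\s=\m_1\dotplus\s^{\circ}$, and the $J$-isometry by noting that the $\s^{\circ}$-components drop out of the indefinite inner product. The only cosmetic difference is in injectivity, where the paper concludes directly from $\m_1\cap\s^{\circ}=\{0\}$ (part of the direct-sum hypothesis), whereas you take a slightly longer but valid detour through $\s^{\circ}\subseteq\m_1^{[\perp]}$ and the non-degeneracy of the regular subspace $\m_1$; your explicit boundedness and open-mapping remarks are fine additions that the paper leaves implicit.
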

\begin{proof}
Set $W=(P_{\m_2//\s^{\circ}} )|_{\m_1}\in L(\m_1, \m_2)$. Let $f \in \m_1$ such that $W f=0$. Then, $f \in \s^{\circ}\cap \m_1=\{0\}$. Thus, $W$ is one-to-one. To show that $W$ is surjective, pick $g \in \m_2$. Then $g=f_{\m_1}+f_{\s^{\circ}}$, where $f_{\m_1} \in \m_1$ and $f_{\s^{\circ}} \in \s^{\circ}$. Therefore $g=P_{\m_2//\s^{\circ}}g=P_{\m_2//\s^{\circ}}f_{\m_1} $. Hence $g=Wf_{\m_1}$. 

Finally, notice that $W$ is a $J$-isometric isomorphism. Indeed, given $f,g \in \m_1$, suppose that $f=f_{\m_2}+f_{\s^{\circ}}$ and $g=g_{\m_2}+g_{\s^{\circ}}$. Since $f_{\s^{\circ}}, g_{\s^{\circ}} \in  \s^{\circ}$, it follows that
\[  [Wf,Wg]=[f_{\m_2},g_{\m_2}]=[f,g]. \]
Hence $W$ is $J$-isometric.
\end{proof}

The next result shows that, given $Q_0\in\q_\s$, any other $Q\in\q_\s$ can be written as $Q=UQ_0U^\#$ for a suitable $U\in \u_\s$.

\begin{prop}\label{accion t grup chic}
The group $\u_{\s}$ acts transitively on $\Q_{\s}$.
\end{prop}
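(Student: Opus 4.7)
The plan is to construct $U\in\u_\s$ with $UQ_0U^\#=Q$ as a $J$-orthogonal direct sum of three $J$-isometric isomorphisms, one per block of the canonical decomposition of $\h$ associated to a $J$-normal projection (Remark~\ref{desc q y 1-q}).

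First I would use Proposition~\ref{desc} to write $Q_0=E_0+P_0$ and $Q=E+P$, producing the two $J$-orthogonal decompositions
\[
\h = R(E_0)\sdo R(P_0+P_0^\#)\sdo R(F_0) = R(E)\sdo R(P+P^\#)\sdo R(F).
\]
Since $R(Q_0)=R(Q)=\s$ forces $R(P_0)=\s^\circ=R(P)$, the subspaces $R(E_0)$ and $R(E)$ are two regular complements of $\s^\circ$ in $\s$, so Lemma~\ref{j isom iso} immediately produces a $J$-isometric isomorphism $V_1:=P_{R(E)//\s^\circ}|_{R(E_0)}:R(E_0)\to R(E)$.

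The step I expect to be the main conceptual hurdle is to notice that $\s^{[\perp]}$ admits an entirely parallel description: using $(I-Q_0)^\#=F_0+P_0$ together with the identities $F_0P_0=P_0F_0=F_0P_0^\#=P_0^\#F_0=0$ from Remark~\ref{desc q y 1-q}, one sees that $F_0+P_0$ is a projection and that $R(F_0)$ is $J$-orthogonal to $\s^\circ=R(P_0)$, whence $\s^{[\perp]}=N(Q_0^\#)=R(F_0)\sdo\s^\circ$, and symmetrically $\s^{[\perp]}=R(F)\sdo\s^\circ$. A second application of Lemma~\ref{j isom iso} then yields $V_2:=P_{R(F)//\s^\circ}|_{R(F_0)}:R(F_0)\to R(F)$. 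For the middle summand I would invoke Lemma~\ref{s link}, choosing the \emph{same} orthonormal basis of $\s^\circ$ on both sides; the resulting $J$-isometric isomorphism $V_3:R(P_0+P_0^\#)\to R(P+P^\#)$ is then the identity on $\s^\circ$ and maps $N(Q_0)^\circ$ onto $N(Q)^\circ$.

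Finally, I would define $U\in L(\h)$ to act as $V_1$, $V_3$, $V_2$ on the three blocks of the first decomposition. Because both decompositions are $J$-orthogonal and each block is a $J$-isometric isomorphism onto the matching piece of the target decomposition, $U$ is $J$-unitary. The identity $U(\s)=V_1(R(E_0))\dotplus V_3(\s^\circ)=R(E)\dotplus\s^\circ=\s$ places $U$ in $\u_\s$, while $U(N(Q_0))=V_2(R(F_0))\dotplus V_3(N(Q_0)^\circ)=R(F)\dotplus N(Q)^\circ=N(Q)$ together with $U^\#=U^{-1}$ yields $UQ_0U^\#=Q$.
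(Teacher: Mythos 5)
Your proposal is correct and follows essentially the same route as the paper: the same three-block $J$-orthogonal decompositions, Lemma \ref{j isom iso} for the two regular blocks (using $\s^{[\perp]}=R(F_0)\sdo\s^\circ=R(F)\sdo\s^\circ$ for the second), and Lemma \ref{s link} with the identity on $\s^\circ$ for the middle block, followed by the blockwise definition of $U$. The only difference is that you spell out the final verification ($U(\s)=\s$, $U(N(Q_0))=N(Q)$, hence $UQ_0U^\#=Q$) a bit more explicitly than the paper does.
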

\begin{proof}
 If $Q, Q_0\in \Q_\s$, consider the usual associated  projections $E$, $F$, $P$ and $E_0$, $F_0$, $P_0$. According to Remark \ref{desc q y 1-q},  $\h$ can be decomposed as
\[
 \h= R(E)\sdo R(P + P^\#) \sdo R(F)= R(E_0)\sdo R(P_0 + P_0^\#) \sdo R(F_0).
\]
Notice that $R(P)=R(P_0)=\s^\circ$. 
Then, by Lemma \ref{s link}, there exists a $J$-isometric isomorphism 
$$V: R(P_0 + P_0^\#) \ra R(P + P^\#),$$
which can be defined as the identity operator on $\s^{\circ}$. 
On the other hand, by Lemma \ref{j isom iso}, there is a $J$-isometric isomorphism $W: R(E_0) \ra R(E)$.

It only remains to show that the ranges of $F$ and $F_0$ are $J$-isometrically isomorphic.
To this end, note that $\s^{[\perp]}$ is also a pseudo-regular subspace.  Moreover, it follows that $\s^{[\perp]}=R(Q)^{[\perp]}=N(Q^{\#})=R(I-Q^{\#})=R(F+P)= R(F)\sdo \s^{\circ}$. Similarly,  one can see that $\s^{[\perp]}=R(F_0)[\dot{+}] \s^{\circ}$. 
Therefore, $R(F)$ and $R(F_0)$ are two different regular complements of $\s^{\circ}$ in $\s^{[\perp]}$. As in the previous paragraph, there is a $J$-isometric isomorphism $W': R(F_0) \ra R(F)$.

Finally, define $U: \h\ra\h$ by $U(f+g+h)=Wf + Vg + W'h$, where $f\in R(E_0)$, $g\in R(P_0 + P_0^\#)$ and $h\in R(F_0)$.
It is easy to see that $U \in \u_{\s}$ and, by construction,  $UQ_0U^\#=Q$.
\end{proof}

Given a pseudo-regular subspace $\s$ of $\h$, consider the family of regular complements of $\s^\circ$ in $\s$:
 $$\f=\{ \, \m \text{ is a regular subspace of $\h$} \, : \, \s=\m [\dot{+}] \s^{\circ}  \, \}.$$
It is not difficult to see that  $\q_{\s}$  can be rewritten as the following disjoint union
\begin{equation}\nonumber
\q_\s= \bigcup_{\m \in \f} \q_{\s,\M},
\end{equation}
where 
$ \q_{\s,\M}=\{ \, Q \in \q_{\s} \, : \, R(QQ^{\#})=\m \,  \}$, see \cite[Lemma 6.4]{maestripieri martinez13}.
For each $\m\in \f$, it is natural to consider the subgroup $\u_{\m, \s^{\circ}}$ of $\u_{\s}$ defined by
\[ \u_{\m, \s^{\circ}}=\{ \, U \in \uj \, : \,  U(\m)=\m, \,  U(\s^{\circ})=\s^{\circ} \, \}.  \]
Clearly, $\u_{\m, \s^{\circ}}$ acts on $\q_{\s , \m}$ by conjugation. Furthermore,

\begin{prop}\label{accion t grup chic deck}
The group $\u_{\m, \s^{\circ}}$ acts transitively on $\Q_{\s, \m}$.
\end{prop}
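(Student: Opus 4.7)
The plan is to refine the construction from Proposition \ref{accion t grup chic} so that the resulting $J$-unitary operator actually leaves $\m$ and $\s^\circ$ invariant. Fix $Q, Q_0 \in \q_{\s, \m}$. By definition, $E := QQ^\# = Q_0 Q_0^\# =: E_0$ is the unique $J$-selfadjoint projection onto the regular subspace $\m$. Moreover, since $R(Q) = R(Q_0) = \s$, the ranges of $P = Q(I - Q^\#)$ and $P_0 = Q_0(I - Q_0^\#)$ both equal $\s^\circ$, so the $J$-orthogonal decompositions
\[
\h = R(E_0) \sdo R(P_0 + P_0^\#) \sdo R(F_0) = \m \sdo (\s^\circ \,\dot{+}\, N(Q_0)^\circ) \sdo R(F_0),
\]
and the analogous one for $Q$, share their first summand, while the intermediate summand in each contains $\s^\circ$ as its isotropic part.

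Following the construction in the proof of Proposition \ref{accion t grup chic}, I would define $U$ piecewise along the decomposition above, but make each piece as canonical as possible. The $J$-isometric isomorphism $W \colon R(E_0) \to R(E)$ supplied by Lemma \ref{j isom iso} reduces to the identity on $\m$, since $R(E) = R(E_0) = \m$. The $J$-isometric isomorphism $V \colon R(P_0 + P_0^\#) \to R(P + P^\#)$, obtained from the biorthogonal basis construction of Lemma \ref{s link}, was already arranged to be the identity on $\s^\circ$. Finally, $W' \colon R(F_0) \to R(F)$ is again supplied by Lemma \ref{j isom iso}, applied to the two regular complements $R(F_0)$ and $R(F)$ of $\s^\circ$ inside $\s^{[\perp]}$.

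With these choices, Proposition \ref{accion t grup chic} guarantees that $U \in \uj$ and $UQ_0U^\# = Q$. By construction $U|_\m$ is the identity, so $U(\m) = \m$; and since $V|_{\s^\circ} = I$, also $U(\s^\circ) = \s^\circ$. Therefore $U \in \u_{\m, \s^\circ}$, which proves transitivity. I do not expect a serious obstacle here: the argument is essentially a specialization of Proposition \ref{accion t grup chic}, and the additional requirements imposed by working within a single deck are met automatically once one notices that the natural choices for $W$ and $V$ already fix $\m$ and $\s^\circ$ pointwise.
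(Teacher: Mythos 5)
Your proposal is correct and follows essentially the same route as the paper, which simply observes that the $J$-unitary $U$ built in the proof of Proposition \ref{accion t grup chic} already leaves $\m$ (and $\s^{\circ}$) invariant when $Q,Q_0\in\q_{\s,\m}$; you merely spell out why ($E=E_0$ is the unique $J$-selfadjoint projection onto $\m$, so $W=I_{\m}$, and $V|_{\s^{\circ}}=I$ by construction). The only quibble is the phrase describing $\s^{\circ}$ as the isotropic part of the middle summand $R(P_0+P_0^{\#})$, which is regular and hence has trivial isotropic part; $\s^{\circ}$ is the isotropic part of $\s$ contained in that summand.
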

\begin{proof}
The same proof of Proposition \ref{accion t grup chic} works in this case. Indeed, the $J$-unitary $U$ constructed in that proof leaves $\m$ invariant whenever $Q,Q_0 \in \q_{\s, \m}$.
\end{proof}

In the next result a continuous selection from $\f$ to $\q_{\s}$ is constructed. The set $\f$ is endowed with the topology defined by the metric $$d(\m , \n)=\| E_{\m} - E_{\n} \|,$$ where $E_{\m}$ denotes the (unique) $J$-selfadjoint projection onto $\m$;
 meanwhile $\q_{\s}$ is considered with the topology inherited from $L(\h)$.

\begin{lem}\label{cont sel}
There exists a continuous map 
$g:\f \to \q_{\s}$ such that $g(\m) \in \q_{\s , \m}$.
\end{lem}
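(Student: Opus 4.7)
The plan is to construct $g$ explicitly by conjugating a fixed base projection by a continuously varying family of $J$-unitaries. Fix a base point $\m_0 \in \f$ and $Q_0 \in \q_{\s, \m_0}$. The first step is to parametrize $\f$: every $\m \in \f$ is the graph of a unique bounded operator $T_\m \in L(\m_0, \s^\circ)$. Indeed, since $\m$ is regular and $\s = \m \sdo \s^\circ$, one has $\s \cap \m^{[\perp]} = \s^\circ$; thus for $x \in \m_0 \subset \s$ the vector $E_\m x - x$ lies in $\s^\circ$, and setting $T_\m := (E_\m - I)|_{\m_0}$ gives $\m = \{x + T_\m x : x \in \m_0\} = E_\m(\m_0)$. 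The bound $\|T_\m - T_{\m'}\| \leq \|E_\m - E_{\m'}\| = d(\m, \m')$ shows that $\m \mapsto T_\m$ is Lipschitz continuous.

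Next, I would form the $J$-antihermitian operator
\[
X(\m) := T_\m E_{\m_0} - (T_\m E_{\m_0})^\# \in \lie.
\]
In the $J$-orthogonal block decomposition $\h = \m_0 \sdo \m_0^{[\perp]}$, write $T_\m E_{\m_0} = \bigl(\begin{smallmatrix} 0 & 0 \\ \tilde T_\m & 0\end{smallmatrix}\bigr)$ with $\tilde T_\m : \m_0 \to \m_0^{[\perp]}$ having range in $\s^\circ$, and correspondingly $(T_\m E_{\m_0})^\# = \bigl(\begin{smallmatrix} 0 & \tilde T_\m^\# \\ 0 & 0\end{smallmatrix}\bigr)$. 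The crucial identity is $\tilde T_\m^\# \tilde T_\m = 0$: for any $x, y \in \m_0$,
\[
[\tilde T_\m^\# \tilde T_\m \, x , \, y] = [\tilde T_\m x, \tilde T_\m y] = 0,
\]
since both $\tilde T_\m x$ and $\tilde T_\m y$ lie in the neutral subspace $\s^\circ$, and regularity of $\m_0$ forces $\tilde T_\m^\# \tilde T_\m x = 0$. A short block-matrix calculation then yields $X(\m)^3 = 0$, so the exponential truncates to a polynomial
\[
U(\m) := \exp(X(\m)) = I + X(\m) + \tfrac{1}{2} X(\m)^2 \in \uj,
\]
which depends polynomially, and hence continuously, on $T_\m$.

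Finally, I would define $g(\m) := U(\m) Q_0 U(\m)^\#$ and verify via the block form: (a) for $x \in \m_0$, the inclusion $R(T_\m E_{\m_0}) \subset \s^\circ \subset \s^{[\perp]}$ together with $\m_0 \subset \s$ yields $(T_\m E_{\m_0})^\# x = 0$, so $X(\m) x = T_\m x \in \s^\circ$ and $X(\m)^2 x = 0$; hence $U(\m)(\m_0) = \m$. (b) For $s \in \s^\circ$, $E_{\m_0} s = 0$ (since $\s^\circ \subset \m_0^{[\perp]}$), while the same neutrality-plus-regularity argument that gave $\tilde T_\m^\# \tilde T_\m = 0$ also gives $\tilde T_\m^\# s = 0$; thus $X(\m) s = 0$ and $U(\m)|_{\s^\circ} = I$. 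Combining (a) and (b) gives $U(\m)(\s) = \m + \s^\circ = \s$, so $U(\m) \in \u_\s$. Therefore $g(\m)$ is $J$-normal with $R(g(\m)) = U(\m)(\s) = \s$ and $R(g(\m) g(\m)^\#) = U(\m)(\m_0) = \m$, i.e., $g(\m) \in \q_{\s, \m}$. Continuity of $g$ is immediate from the continuous chain $\m \mapsto E_\m \mapsto T_\m \mapsto X(\m) \mapsto U(\m) \mapsto g(\m)$. The main technical point is isolating the nilpotency $X(\m)^3 = 0$: it is precisely the neutrality of $\s^\circ$ that forces $\tilde T_\m^\# \tilde T_\m = 0$ and hence makes $X(\m)$ nilpotent of order three, reducing the exponential to an explicit polynomial in $T_\m$.
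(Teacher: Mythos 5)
Your construction is correct, and it takes a genuinely different route from the paper. The paper does not build a $J$-unitary at all: it invokes the explicit parametrization of $\q_{\s,\m}$ from \cite[Theorem 6.9]{maestripieri martinez13}, written as a $3\times 3$ block matrix with respect to the orthogonal decomposition $\h=\s^{\circ}\oplus(\s\ominus\s^{\circ})\oplus\s^{\perp}$ and depending on free parameters $A$, $B$ and on $r(\m)=P_{\s\ominus\s^{\circ}}E_{\m}|_{J(\s^{\circ})}$; setting $A=B=0$ gives a canonical element of each deck, and continuity is immediate because $r$ is Lipschitz in $E_\m$, which is exactly the metric on $\f$. You instead fix one base projection and conjugate it: you parametrize $\f$ by the graph operators $T_\m=(E_\m-I)|_{\m_0}\in L(\m_0,\s^{\circ})$, observe that neutrality of $\s^{\circ}$ plus nondegeneracy of $\m_0$ forces $\tilde T_\m^{\#}\tilde T_\m=0$, hence $X(\m)^3=0$, so that $U(\m)=e^{X(\m)}=I+X(\m)+\tfrac12 X(\m)^2$ is a $J$-unitary depending polynomially on $E_\m$, acts as the identity on $\s^{\circ}$ and maps $\m_0$ onto $\m$ (in fact $U(\m)|_{\m_0}=(P_{\m//\s^{\circ}})|_{\m_0}$, the isomorphism of Lemma \ref{j isom iso}, so your $U(\m)$ is a concrete $J$-unitary extension of it), and then $g(\m)=U(\m)Q_0U(\m)^{\#}\in\q_{\s,\m}$. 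I checked the key points ($\s\cap\m^{[\perp]}=\s^{\circ}$, the identity $\tilde T_\m^{\#}\tilde T_\m=0$, the action of $U(\m)$ on $\m_0$ and on $\s^{\circ}$, and the continuity chain), and they all hold. What each approach buys: the paper's proof is shorter given the quoted characterization and produces a distinguished element of each deck without any base-point choice; yours is self-contained (no appeal to the external parametrization theorem), shows in passing that every deck $\q_{\s,\m}$ is nonempty and that $g(\m)$ lies in the $\u_{\s}$-orbit of $Q_0$ with an implementing $J$-unitary depending continuously on $\m$ --- data in the spirit of Proposition \ref{sec global qs}.

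One small point of order: you fix $\m_0\in\f$ and then take $Q_0\in\q_{\s,\m_0}$, which presupposes that this particular deck is nonempty --- something not yet available for an arbitrary $\m_0$ at this stage (it is part of what the lemma delivers). This is harmless to repair: since $\s$ is pseudo-regular there exists some $Q_0\in\q_{\s}$, and you should simply define $\m_0:=R(Q_0Q_0^{\#})\in\f$ afterwards; the rest of your argument goes through verbatim.
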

\begin{proof}
 Let $\m$ be a regular subspace of 
$\h$ such that $\s=\m [\dot{+}] \s^{\circ}$. Consider the following orthogonal decomposition  $\h=\s^{\circ} \oplus (\s \ominus \s^{\circ} )\oplus \s^{\perp}$. According to \cite[Theorem 6.9]{maestripieri martinez13}, a $J$-normal projection $Q$ belongs to $\q_{\s, \m}$ if and only if $Q$ can be written as 
\[ 
Q=\left( \begin{array}{ccc}
I &  0 & A + (\text{Re}(Bc^*br a^*) - \frac{1}{2}(BdB^*+ar^*b^3ra^*))a + B + ar^*(c+b) \\
 0 &  I  &  b^{-1}c + r \\
 0  &  0 &  0 
\end{array}
\right),
\]
where $r=P_{\s \ominus \s^{\circ}} E_{\m}|_{J(\s^\circ)}\in L(\s^\bot, \s\ominus\s^\circ)$, $A=-A^* \in L(\s^{\circ})$ and $B \in L(\s^{\perp}, \s^{\circ})$ satisfies $J(\s^{\circ})\subseteq N(B)$. Here the lowercase letters $a,b,c$ and $d$ come from the decomposition of the fundamental symmetry
\[ 
	J= \left( \begin{array}{ccc}
	 0 &  0 & a \\
	0 &  b  &  c \\
	a^*  &  c^* &  d
	\end{array}
\right).
\]
Clearly, $r:\f \to L(\s^\bot, \s\ominus\s^\circ)$ given by $r(\m)=P_{\s \ominus \s^{\circ}} E_{\m}|_{J(\s^\circ)}$ is a continuous function by the definition of the metric in $\f$.
To construct the required continuous selection, it is possible to set $A=B=0$ in the above decomposition. Therefore, the map
$$
g: \f \to \q_{\s} \, \, \, \text{defined by}\, \, \, 
g(\m)=\left( \begin{array}{ccc}
I &   0 & - \frac{1}{2}ar^*b^3ra^*a + ar^*(c+br) \\ 
0 &  I  & b^{-1}c + r  \\
 0  &  0 &   0\\
\end{array}
\right),
$$
 satisfies $g(\m) \in \q_{\s , \m}$. The continuity of $g$ follows from that of $r$.
\end{proof}

The map defined locally in Lemma \ref{sec cont lema} can be defined globally in $\q_{\s}$. This allows to prove the existence of a global section for the restriction of $p_{Q_0}$ to $\u_\s$.

\begin{prop}\label{sec global qs}
Let $Q_0$ be a projection in $\q_\s$. Then, the map $$p_{Q_0}: \u_{\s} \to \q_\s, \, \, \, \, 
p_{Q_0}(U)=UQ_0U^{\#},$$
has a global continuous cross section.
\end{prop}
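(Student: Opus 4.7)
The plan is to upgrade the local cross section of Theorem \ref{section} to a global one on $\q_\s$, exploiting two geometric simplifications specific to the fixed-range setting. Write $\m(Q) = R(QQ^\#)$ and $\m_0 = R(Q_0Q_0^\#)$ throughout. The first simplification is that the map $V$ of Lemma \ref{sec cont lema} extends globally to $\q_\s$: since $R(Q)^\circ = \s^\circ$ for every $Q \in \q_\s$, the ad-hoc unitary $U(P_{R(Q)^\circ})$ used there may be taken to be the identity for every $Q$. Fixing once and for all an orthonormal basis $\{s_n\}_{n\geq 1}$ of $\s^\circ$, the Riesz bases $t_{n,Q} = P_Q^\# J s_n$ of $N(Q)^\circ$ produced by Lemma \ref{biort} depend continuously on $Q$ throughout $\q_\s$, and the resulting $J$-isometric isomorphism $V(Q): \s^\circ \dot{+} N(Q_0)^\circ \to \s^\circ \dot{+} N(Q)^\circ$, extended by zero on the $J$-orthogonal complement and equal to the identity on $\s^\circ$, is globally continuous by the same computation as in Lemma \ref{sec cont lema}.

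The second simplification is that the regular-part sections extend globally via $g$. The continuous selection $g: \f \to \q_\s$ of Lemma \ref{cont sel} provides a canonical, continuous parametrization of the regular complements $\m \in \f$ of $\s^\circ$ in $\s$; in particular, the $J$-selfadjoint projection $E_\m$ onto $\m$ depends continuously on $\m$. Combining this with Lemma \ref{j isom iso}, applied to $\s$ and to $\s^{[\perp]}$, yields continuous $J$-isometric isomorphisms
\[
T_1(Q) = \bigl(P_{\m(Q) // \s^\circ}\bigr)\big|_{\m_0} : \m_0 \to \m(Q), \qquad T_2(Q) : R(F_0) \to R(F),
\]
where $F_0 = (I-Q_0)(I-Q_0)^\#$ and $F = (I-Q)(I-Q)^\#$ are the usual projections, and where the second isomorphism uses that $\s^{[\perp]} = R(F) \sdo \s^\circ = R(F_0) \sdo \s^\circ$ is pseudo-regular.

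The assembly, following the pattern of Theorem \ref{section}, is
\[
s(Q) = T_1(Q)\,E_0 + T_2(Q)\,F_0 + (P + P^\#)\,V(Q)\,(P_0 + P_0^\#),
\]
with $T_1(Q), T_2(Q)$ extended by zero off $\m_0$ and $R(F_0)$. Each summand is continuous in $Q$, and $s(Q)$ is a $J$-isometric isomorphism between the $J$-orthogonal decompositions $\h = R(E_0) \sdo R(P_0+P_0^\#) \sdo R(F_0)$ and $\h = R(E) \sdo R(P+P^\#) \sdo R(F)$, hence a $J$-unitary. The block-multiplication from Theorem \ref{section} gives $s(Q) Q_0 s(Q)^\# = Q$ (using $V(Q)P_0 V(Q)^\# = P|_{R(P+P^\#)}$, which follows from $V(Q)$ being the identity on $\s^\circ$), and $s(Q)(\s) = T_1(Q)(\m_0) + V(Q)(\s^\circ) = \m(Q) + \s^\circ = \s$, so $s(Q) \in \u_\s$.

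The chief obstacle is the second simplification: replacing the merely local sections $s_1, s_2$ of Theorem \ref{section} by globally continuous families. Lemma \ref{cont sel} is precisely what removes this obstacle, providing the continuous dependence of $E_\m$ on $\m \in \f$ and hence of the canonical Lemma \ref{j isom iso} isomorphism on $Q$ across the entire parameter space $\f$, rather than on just a neighborhood of $\m_0$.
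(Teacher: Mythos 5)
Your construction of $s(Q)$ is essentially the paper's: the paper builds the same block operator, namely $P_{R(E)//\s^{\circ}}E_0+P_{R(F)//\s^{\circ}}F_0+V(Q)V(Q_0)^{\#}(P_0+P_0^{\#})$, with $V$ globalized exactly as in your first simplification because $R(Q)^{\circ}=\s^{\circ}$ is fixed on $\q_\s$. However, there is a genuine gap at the point you yourself call the ``chief obstacle''. The continuity in $Q$ of the regular-block isomorphisms $T_1(Q)=(P_{\m(Q)//\s^{\circ}})|_{\m_0}$ and $T_2(Q)$ does not follow from Lemma \ref{cont sel}. That lemma produces a continuous selection $g\colon\f\to\q_\s$, a map in the opposite direction to what you need (and it is only used later, for Theorem \ref{cov spa fixed range}); moreover the statement that ``$E_{\m}$ depends continuously on $\m$'' is vacuous, since the metric on $\f$ is defined precisely by $d(\m,\n)=\|E_{\m}-E_{\n}\|$. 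Lemma \ref{j isom iso} is likewise a purely algebraic statement about one fixed pair of complements and carries no continuity information. What must actually be proved is that if $Q_k\to Q$ in the norm of $L(\h)$ then the oblique projections $P_{\m(Q_k)//\s^{\circ}}$ (and the analogous ones inside $\s^{[\perp]}$) converge in norm. This is exactly the part of the paper's proof that takes work: it combines Lemma \ref{kato proj} with the formula $P_{R(E)//\s^{\circ}}=P_{R(E)}(P_{R(E)}+P_{\s^{\circ}})^{-1}$ from \cite[Lemma 3.1]{ando11}.

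The gap is repairable, and in fact more simply than in the paper: since $\m(Q)\subseteq\s$ gives $\s^{\circ}\subseteq\s^{[\perp]}\subseteq\m(Q)^{[\perp]}=N(E)$, while $E$ is the identity on $\m(Q)$, one gets $T_1(Q)E_0=P_{\m(Q)//\s^{\circ}}E_0=EE_0=QQ^{\#}Q_0Q_0^{\#}$, and similarly $T_2(Q)F_0=FF_0=(I-Q)(I-Q)^{\#}(I-Q_0)(I-Q_0)^{\#}$, so these summands are visibly norm-continuous (indeed analytic) in $Q$. Either this observation or the paper's Kato--Ando argument should replace your appeal to Lemma \ref{cont sel}; as written, the continuity of $s$ --- the substance of the proposition --- is not justified. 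The remaining steps (globalizing $V$ using that the isotropic part is fixed, and the verification that $s(Q)$ is $J$-unitary, leaves $\s$ invariant, and conjugates $Q_0$ to $Q$) are correct and coincide with the paper's argument.
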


\begin{proof}
First recall that $\h = R(E_i) \sdo R(F_i)\sdo R(P_i + P_i^\#)$ for $i =1,2$ and consider $U:\q_{\s} \times \q_{\s} \to \u_{\s}$ defined by
\[
U(Q_1,Q_2)=P_{R(E_2)//\s^{\circ}}  E_1  + P_{R(F_2)//\s^{\circ}}  F_1  + V(Q_2)V(Q_1)^{\#}(P_1 + P_1^{\#}).
\]
The map $U$ is a $J$-isometric isomorphism 
restricted to each of these three pairs of subspaces. In fact,  the map $R(E_1) \to R(E_2)$ given by $f \mapsto  P_{R(E_2)//\s^{\circ}}f$ is a $J$-isometric isomorphism by Lemma \ref{j isom iso}. Similarly, one can see that $R(F_1) \to R(F_2)$ given by $f \mapsto  P_{R(F_2)//\s^{\circ}}f$ is also a $J$-isometric isomorphism. 
Also, by Lemma \ref{s link}, $V(Q_2)V(Q_1)^\#$ is a $J$-isometric isomorphism from $R(P_1 + P_1^\#)$ onto $ R(P_2 + P_2^\#)$. 
Hence $U(Q_1,Q_2)$ is a $J$-unitary. Moreover, it is the identity operator in $\s^{\circ}$, and it leaves $\s$ invariant. Hence the operator $U(Q_1,Q_2)$ belongs to $ \u_{\s}$. In addition, by Lemma \ref{j isom iso} and Lemma \ref{s link} it follows that 
\[
U(Q_1,Q_2) \, Q_1 \, U(Q_1,Q_2)^{\#}=Q_2.
\]
Note that $E_i=Q_i Q_i ^{\#}$, $F_i=(I-Q_i)(I- Q_i )^{\#}$ and $P_i=Q_i(I-Q_i ^{\#})$ are continuous functions of $Q_i$ for $i=1,2$. On the other hand, the continuity of $V(Q_i)$, $i=1,2$, and consequently, the continuity of $ V(Q_2)V(Q_1)^{\#}$, is proved in
 Lemma \ref{s link}. To show that $U$ is a continuous map, it remains to prove that the maps
$\q_\s \to L(\s)$  given by $Q \mapsto P_{R(E)//\s^{\circ}}$ and $\q_\s \to L(\s^{\ort})$  given by $Q \mapsto P_{R(F)//\s^{\circ}}$ are continuous. 

Let $\{ Q_k \}_{k \geq 1}$ be a sequence in $\q_{\s}$ such that $\| Q_k - Q\| \to 0$. 
Again recall that $E=Q Q ^{\#}$ is a continuous function of $Q$.  
Thus, by Lemma \ref{kato proj} one finds that 
\[   \| P_{R(E_k)} - P_{R(E)} \| \leq \| E_k - E\| \to 0.  \]  
Applying the formula proved in \cite[Lemma 3.1]{ando11}, the projection $ P_{R(E) // \s^\circ}$ can
be rewritten as
\[   P_{R(E) // \s^\circ}=P_{R(E)}(P_{R(E)}+P_{\s^\circ})^{-1}.   \] 
Notice that this formula is a continuous function of the orthogonal projection $P_{R(E)}$. Hence it
follows that $\|P_{R(E_k) // \s^\circ} - P_{R(E) // \s^\circ} \| \to 0$. The proof of the continuity of $Q \mapsto P_{R(F)//\s^{\circ}}$ is similar. 

Therefore, the map $s:\q_\s \to \u_{\s}$ defined by $s(Q)=U(Q_0,Q)$ is a global continuous cross section of $p_{Q_0}(U)=UQ_0U^{\#}$.
\end{proof}

In the next result $s$ stands for the global section considered in Proposition \ref{sec global qs}, and $g$ is the continuous selection defined in Lemma \ref{cont sel}.

\begin{teo}\label{cov spa fixed range}
Let $\s$ be a pseudo-regular subspace and $\m_0$ be a regular subspace such that 
$\s=\m_0 \sdo \s^{\circ}$.  Let  $Q_0$ be a fixed projection in $\q_{\s , \m_0}$. Consider the map $ r: \q_\s \to \q_{\s , \m_0}$ defined by
\[
r(Q)=s(g(\m))^{\#}\, Q \,s(g(\m)), 
\]
whenever $Q \in \q_{\s , \m}$. Then $r$ is a covering map.
\end{teo}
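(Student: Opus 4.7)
The plan has three steps: verify that $r$ is well-defined and continuous, construct global right inverses $\sigma_\m$ of $r$ indexed by $\f$, and combine these to produce the local trivializations required for the covering property.

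For well-definedness, given $Q\in \q_{\s,\m}$, I would exploit the defining property of the global section from Proposition \ref{sec global qs}, namely $s(P)Q_0 s(P)^\# = P$ for every $P\in\q_\s$. Specialising to $P=g(\m)\in \q_{\s,\m}$ gives $s(g(\m))Q_0 s(g(\m))^\# = g(\m)$, so the $J$-unitary $s(g(\m))\in \u_\s$ sends $E_{\m_0}=Q_0 Q_0^\#$ to $E_\m=g(\m)g(\m)^\#$. Consequently $r(Q)r(Q)^\# = s(g(\m))^\# Q Q^\# s(g(\m)) = s(g(\m))^\# E_\m s(g(\m)) = E_{\m_0}$, and $R(r(Q))=\s$ because $s(g(\m))$ preserves $\s$; hence $r(Q)\in \q_{\s,\m_0}$. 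For continuity of $r$, I would first observe that the assignment $\pi:\q_\s \to \f$ given by $\pi(Q)=R(QQ^\#)$ is continuous, since $Q\mapsto QQ^\# = E_{\pi(Q)}$ is continuous in $L(\h)$ and the metric on $\f$ is $d(\m,\n)=\|E_\m - E_\n\|$. Composing with the continuous maps $g$ (Lemma \ref{cont sel}) and $s$ (Proposition \ref{sec global qs}), and using joint continuity of conjugation, one obtains the continuity of $r$.

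Next, for each $\m\in\f$, I would define $\sigma_\m:\q_{\s,\m_0}\to \q_{\s,\m}$ by $\sigma_\m(Q')=s(g(\m))Q' s(g(\m))^\#$. A calculation dual to the one above shows $\sigma_\m(Q')\in \q_{\s,\m}$, and straightforward cancellation gives $r\circ \sigma_\m = \mathrm{id}_{\q_{\s,\m_0}}$ as well as $\sigma_\m \circ r|_{\q_{\s,\m}} = \mathrm{id}_{\q_{\s,\m}}$. Since $\sigma_\m$ is conjugation by a fixed $J$-unitary, it is a homeomorphism of $L(\h)$, so $\sigma_\m$ and $r|_{\q_{\s,\m}}$ are mutually inverse homeomorphisms between $\q_{\s,\m_0}$ and $\q_{\s,\m}$.

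Finally, for the covering property, fix $Q'\in \q_{\s,\m_0}$ and an open neighbourhood $V$ of $Q'$ in $\q_{\s,\m_0}$. Using the disjoint decomposition $\q_\s=\bigsqcup_{\m\in\f}\q_{\s,\m}$, one obtains $r^{-1}(V)=\bigsqcup_{\m\in\f} \sigma_\m(V)$, and $r$ restricts to a homeomorphism of $\sigma_\m(V)$ onto $V$ for each $\m$. The main obstacle I anticipate is showing that each $\sigma_\m(V)$ is open in $\q_\s$, not merely in the deck $\q_{\s,\m}$. The decks are closed in $\q_\s$ as fibres of the continuous map $\pi$, but openness is delicate: around any $Q\in \q_{\s,\m}$ one must produce a neighbourhood in $\q_\s$ consisting entirely of projections with the same regular range. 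I expect this to follow from a local product-type decomposition of $\q_\s$ over $\f$, obtained by combining the explicit parametrisation of elements of $\q_{\s,\m}$ provided in the proof of Lemma \ref{cont sel} with the continuous selections $g$ and $s$, which together allow one to decouple the regular-part parameter $\m\in\f$ from the remaining parameters describing $Q$ inside its deck. Once this local product structure is in hand, the sheets $\sigma_\m(V)$ correspond to base slices and are therefore open, completing the verification that $r$ is a covering map.
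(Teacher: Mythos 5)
Your first three steps are essentially the paper's proof. The paper verifies well-definedness from the disjointness of the decks, takes as deck-wise inverse exactly your $\sigma_\m$ (there called $f$), and obtains continuity of $r$ from the continuity of $g$ (Lemma \ref{cont sel}) and of $s$ (Proposition \ref{sec global qs}); your explicit observations that $s(g(\m))Q_0s(g(\m))^{\#}=g(\m)$ forces $r(Q)\,r(Q)^{\#}=E_{\m_0}$, and that $Q\mapsto R(QQ^{\#})$ is continuous because $QQ^{\#}=E_{\m}$ and $\f$ carries the metric $d(\m,\n)=\|E_\m-E_\n\|$, only make explicit what the paper leaves implicit. Your surjectivity argument ($r\circ\sigma_\m=\mathrm{id}$) is even a bit shorter than the paper's, which instead invokes the transitivity of $\u_{\m_0,\s^{\circ}}$ on $\q_{\s,\m_0}$ (Proposition \ref{accion t grup chic deck}) together with the identity $Ad_{s(g(\m))}(\u_{\m,\s^{\circ}})=\u_{\m_0,\s^{\circ}}$.

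The step you single out as the main obstacle --- openness of the sheets $\sigma_\m(V)$ in $\q_\s$ --- is not addressed in the paper at all: its proof ends exactly where your first three steps end, so the covering property is certified there by the disjoint decomposition into decks, surjectivity, continuity of $r$, and the deck-wise homeomorphisms. Be aware, moreover, that the local product structure you hope for cannot produce open sheets for the norm topology. If $\s^{\circ}\neq\{0\}$ and $\m_0\neq\{0\}$, then $\f$ is not discrete: the regular complements of $\s^{\circ}$ in $\s$ are the graphs $\m_K=\{\,m+Km \,:\, m\in\m_0\,\}$ with $K\in L(\m_0,\s^{\circ})$, and $K\mapsto E_{\m_K}$ is norm-continuous and injective. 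Since $r(g(\m))=Q_0$ for every $\m\in\f$, while $g$ is a continuous injection of the non-discrete space $\f$ into $\q_\s$, the map $r$ is constant on a continuum accumulating at $g(\m_0)$; hence the decks are not open in $\q_\s$ and $r$ is not locally injective, so no refinement of your scheme can yield evenly covered neighbourhoods in the strict sense. The statement must therefore be read with the weaker notion of covering that the paper's proof actually establishes --- continuous surjection restricting to a homeomorphism of each deck onto $\q_{\s,\m_0}$ --- and with that reading your proposal is already complete.
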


The map $r$ has an alternative expression. By Lemma \ref{accion t grup chic deck} there exists $U \in \u_{\m,\s^{\circ}}$ such that $Q=Ug(\m)U^{\#}$. Therefore, note that
\begin{align}
r(Q) &= s(g(\m))^{\#} \, U g(\m) U^{\#}  \, s(g(\m)) \nonumber \\
 &= s(g(\m))^{\#} U s(g(\m)) \, Q_0 \, s(g(\m))^{\#} U^{\#} s(g(\m))\nonumber \\
&= Ad_{s(g(\m))} (U)\, Q_0 \, (Ad_{s(g(\m))} (U))^{\#}, \label{another exp}
\end{align}
where $Ad_U: L(\h) \to L(\h)$ is defined by $Ad_U(X)=U^{\#}XU$, for $U \in \uj$ and $X \in L(\h)$. This expression does not depend on the choice of $U\in\u_{\m,\s^\circ}$.

\begin{proof}
It has been  previously noted that $\q_\s$ is the disjoint union of the decks $\q_{\s,\m}$ with $\m \in \f$. Then, for any 
$Q \in \q_\s$ there exists a unique $\m \in \f$ such that $Q \in \q_{\s, \m}$. Thus, $r$ is well defined. 

Next notice that $r$ is a surjective map. For this purpose it is helpful to use the alternative expression of $r$. Suppose that 
\[
g(\m)=E_\m + P \ \ \ \text{ and } \ \ \ I-g(\m)=F+ P^{\#}. 
\]
Note that $V=s(g(\m))$ satisfies $V(\m_0)=\m$, $V(R(F_0))=R(F)$, $V(\s^{\circ})=\s^{\circ}$ and $V(R(P_0^{\#}))=R(P^{\#})$. From this latter fact, it is not difficult to see that  $Ad_{V}(\u_{\m, \s^{\circ}})=\u_{\m_0, \s^{\circ}}$. 
According to Lemma  \ref{accion t grup chic deck} the group $\u_{\m_0, \s^{\circ}}$ transitively acts on $\q_{\s ,\m_0}$,   and consequently,  $r$ turns out to be surjective. 

Observe that there is a continuous inverse of the restriction of $r$ to $\q_{\s ,\m}$, which is given by
$$
f:=(r|_{\q_{\s ,\m}})^{-1}: \q_{\s ,\m_0} \to \q_{\s ,\m}, \, \, \, f(Q)=s(g(\m)) \, Q\, s(g(\m))^{\#} .
$$
Since $s$ is a continuous map by Proposition \ref{sec global qs}, it follows that $f$ is continuous. To show that $f$ is actually  the inverse of $r|_{\q_{\s ,\m}}$, observe that if $Q\in\q_{\s,\m}$,
\begin{align*}
(f\circ r)(Q) &= f(s(g(\m))^{\#}\, Q \,s(g(\m))) \\
&= s(g(\m)) s(g(\m))^{\#}\, Q \,s(g(\m)) s(g(\m))^{\#}=Q.
\end{align*}
Also, $f(Q) \in \q_{\s ,\m}$ and
\begin{align*} 
(r\circ f)(Q) &=r(s(g(\m)) \, Q\, s(g(\m))^{\#})= \\
&= s(g(\m))^{\#} s(g(\m)) \, Q\, s(g(\m))^{\#} s(g(\m))=Q.
\end{align*} 
The map $g$ is continuous by  Lemma \ref{cont sel}, meanwhile 
the map $s$  is continuous by Proposition \ref{sec global qs}. 
Thus,  $r$ is clearly a continuous map. This completes the proof. 
\end{proof}

\subsection*{Acknowledgement}

The authors are indebted with Prof. Aurelian Gheondea for some discussions on the material of this manuscript. 
In addition, they  wish to thank the referee for his/her careful reading of this work and helpful comments.

{\bf Eduardo Chiumiento}

Facultad de Ciencias Exactas -- Universidad Nacional de La Plata

and

Instituto Argentino de Matem\'atica ``Alberto P. Calder\'on''

Saavedra 15, Piso 3

(1083) Buenos Aires 

Argentina

eduardo@mate.unlp.edu.ar

\medskip

{\bf Alejandra Maestripieri}

Facultad de Ingenier\'{\i}a -- Universidad de Buenos Aires

and 

Instituto Argentino de Matem\'atica ``Alberto P. Calder\'on''

Saavedra 15, Piso 3

(1083) Buenos Aires 

Argentina 

amaestri@fi.uba.ar

\medskip

{\bf Francisco Mart\'{\i}nez Per\'{\i}a}

Facultad de Ciencias Exactas -- Universidad Nacional de La Plata

and

Instituto Argentino de Matem\'atica ``Alberto P. Calder\'on''

Saavedra 15, Piso 3

(1083) Buenos Aires 

Argentina 

francisco@mate.unlp.edu.ar

\end{document}